\newtheorem{theorem}{Theorem}
\newtheorem{lemma}[theorem]{Lemma}
\newtheorem{corollary}[theorem]{Corollary}
\newtheorem{claim}[theorem]{Claim}
\newtheorem{proposition}[theorem]{Proposition}
\newtheorem{definition}[theorem]{Definition}
\newtheorem{remark}[theorem]{Remark}
\newcommand{\field}[1]{\mathbb{#1}}
\newcommand{\R}{\field{R}}
\newcommand{\wt}[1]{\widehat{#1}}
\newcommand{\ba}{{\bf a}}
\newcommand{\bb}{{\bf b}}
\newcommand{\bbo}{{\bf 0}}
\newcommand{\bB}{{\bf B}}
\newcommand{\bc}{{\bf c}}
\newcommand{\bS}{{\bf S}}
\newcommand{\bu}{{\bf u}}
\newcommand{\bU}{{\bf U}}
\newcommand{\bw}{{\bf w}}
\newcommand{\bx}{{\bf x}}
\newcommand{\by}{{\bf y}}
\newcommand{\bz}{{\bf z}}
\newcommand{\cD}{\mathcal{D}}
\newcommand{\cL}{\mathcal{L}}
\newcommand{\cM}{\mathcal{M}}
\newcommand{\clos}{{\rm clos}}
\newcommand{\dlt}{{\delta}}
\newcommand{\Dlt}{{\Delta}}
\newcommand{\gm}{{\gamma}}
\newcommand{\iphi}{{\iota(\phi)}}
\newcommand{\iPhi}{{\iota(\Phi)}}
\newcommand{\ipsi}{{\iota(\psi)}}
\newcommand{\lsm}{{\lesssim}}
\newcommand{\omg}{\omega}
\newcommand{\Rp}{{\R^p}}
\newcommand{\Rpbar}{\overline{\R^p}}
\newcommand{\Rqbar}{\overline{\R^q}}
\newcommand{\Rq}{{\R^q}}
\newcommand{\sgm}{\sigma}
\newcommand{\tht}{{\theta}}
\newcommand{\ups}{{\upsilon}}
\newcommand{\vp}{\varphi}
\newcommand{\wtK}{\widetilde{K}}
\newcommand{\wtPhi}{\widetilde{\Phi}}
\newcommand{\wtS}{\widetilde{S}}
\newcommand{\wtW}{\widetilde{W}}
\numberwithin{equation}{section}
\numberwithin{equation}{section}
\begin{document}
%%%%%%%%%%%%%%%%%%%%%%%%%%%%%%%%%%%%%%%%%%%%%%%%%%%%%%%%%%%%%%%%%%%%%%
%%%%%%%%%%%%%%%%%%        Titre     %%%%%%%%%%%%%%%%%%%%%%%%%%%%%%%%
%%%%%%%%%%%%%%%%%%%%%%%%%%%%%%%%%%%%%%%%%%%%%%%%%%%%%%%%%%%%%%%%%%%%%
\title[]{Stereographic compactification and affine bi-Lipschitz homeomorphisms.}

%    Information for first author
\author[V. Grandjean]{Vincent Grandjean}
% \email{vgrandjean@math.ufc.br}
\thanks{We are very grateful to Andr\'e Costa and Maria Michalska for 
conversations, comments and insight.}
\address{V. Grandjean, Departamento de Matem\'atica, 
Universidade Federal de Santa Catarina, 
88.040-900 Florianópolis - SC, Brasil}
\email{vincent.grandjean@ufsc.br}
%
%    Information for second author
\author[R. Oliveira]{Roger Oliveira}
\address{R. Oliveira, Faculdade de Educa\c{c}\~ao, Ci\^encias e Letras do Sert\~ao Central,
Planalto Universit\'ario, Quixad\'a - CE, 63900-000}
\email{roger.oliveira@uece.br}
%\footnote{\textbf{VERSION PROVISOIRE DU 15 MAI 2004}}

%\dedicatory{This paper is dedicated to our authors.}

% \keywords{}
% _X
% \date{\today}

\begin{abstract}
Let $\sgm_q : \Rq \to \bS^q\setminus N_q$ be the inverse of the stereographic 
projection with centre the north pole $N_q$. Let $W_i$ be a closed subset of 
$\R^{q_i}$,
%% containing the origin $\bbo$, 
for $i=1,2$. Let $\Phi:W_1 \to W_2$ be a bi-Lipschitz homeomorphism.
%% mapping $\bbo$ onto $\bbo$. 
The main result 
states that the homeomorphism $\sgm_{q_2}\circ \Phi \circ \sgm_{q_1}^{-1}$ 
is a bi-Lipschitz homeomorphism, extending bi-Lipschitz-ly at $N_{q_1}$ with
value $N_{q_2}$ whenever $W_1$ is unbounded. 

As two straightforward 
applications in the 
polynomially bounded o-minimal context over the real numbers, we obtain for
free a version at infinity of: 1) Sampaio's tangent cone result; 2) Links 
preserving re-parametrization of definable bi-Lipschitz homeomorphisms of 
Valette. 
\end{abstract}

\maketitle

%%%%%%%%%%%%%%%%%%%%%%%%%%%%%%%%%%%%%%%%%%%%%%%%%%%%%%%%%%%%%%%%%%%%%%%%%%
%%%%%%%%%%%%%%%%%%%%%%%%%%%%%%%%%%%%%%%%%%

%%%%%%%%%%%%%%%%%%%%%%%%%%%%%%%%%%%%%%%%%%%%%%%%%%%%%%%%%%%%%%%%%%%%%%%%%%
%%%%%%%%%%%%%%%%%%%%%%%%%%%%%%%%%%%%%%%%%%

\section{Introduction}

\medskip
Any subset $S$ of $\Rq$ is equipped with the \em outer metric structure, \em 
where the distance between points of $S$ is their (Euclidean) distance in $\Rq$. 
Thus  (outer) Lipschitz mappings $S_1 \to S_2$, for 
$S_i$ a subset of $\R^{q_i}$, are well defined.

\medskip
Given a positive integer $q$, let $N_q = (0,\ldots,0,1) \in\R^q\times\R$, 
the north pole of the unit sphere $\bS^q$ of $\R^{q+1}$. The inverse of the 
\em stereographic projection of $\bS^q$ onto $\Rq$ with centre $N_q$ \em
is denoted $\sgm_q : \R^q \to \bS^q\setminus N_q$.
Let $\wtS$ be the closure in $\bS^q$ of $\sgm_q(S)$ where $S$ is a closed 
subset of $\Rq$. 

\medskip
For $i=1,2$, let $W_i$ be a closed subset of $\R^{q_i}$.
 If $\Phi :W_1 \to W_2$ is a homeomorphism, the \em 
stereographic pre-compactification of $\Phi$ \em is the following homeomorphism 
$$
\sgm_{q_2}\circ\Phi\circ \sgm_{q_1}^{-1} : \sgm_{q_1}(W_1) \to 
\sgm_{q_2}(W_2). 
$$
Since $W_1,W_2$ are closed, we observe that the stereographic pre-compactification 
of $\Phi$ extends as a homeomorphism $\wtPhi:\wtW_1 \to \wtW_2$ mapping 
$N_{q_1}$ onto $N_{q_2}$ whenever $W_1$ is unbounded. 
We call this extension the \em stereographic compactification of $\Phi$. \em
If $W_1$ is compact, so is $W_2$ and the stereographic pre-compactification of $\Phi$ is 
its stereographic compactification. 

\medskip
The main 
result of this note is the
following

\medskip\noindent
{\bf Theorem \ref{thm:main}.} \em The mapping $\Phi$ is bi-Lipschitz if
and only if its stereographic compactification $\wtPhi$ is bi-Lipschitz. \em

\medskip
The main result is a consequence of Lemma \ref{lem:main} presented below. 
We recall that the \em Euclidean inversion of $\R^q$ \em is the
following mapping 
$$
\iota_q : \R^q\setminus \bbo \to \R^q \setminus \bbo, \;\;
\bx \; \mapsto \; \frac{\bx}{|\bx|^2}.
$$
Let $\Phi:W_1 \to W_2$ be a homeomorphism between the closed subsets $W_i$
of $\R^{q_i}$, $i=1,2$. 
%%For $i=1,2$, let $W_i$ be a closed subset of $\R^{q_i}$. containing the 
%%origin
%%$\bbo$. Let $\Phi :W_1 \to W_2$ be a  homeomorphism mapping the 
%%origin $\bbo\in \R^{q_1}$ onto the origin $\bbo \in \R^{q_2}$. 
The \em inversion of the mapping  $\Phi:W_1 \to W_2$ \em is defined as follows
$$
\iPhi : \iota_{q_1}(W_1\setminus \bbo) \to  \iota_{q_2}(W_2\setminus \bbo), 
\;\; \bx \mapsto \iPhi(\bx) :=  \iota_{q_2} \circ \Phi \circ \iota_{q_1}^{-1} (\bx).
$$

\smallskip
\noindent
The next result (so called \em the inversion lemma\em) is the main tool we use to obtain the main result. It is
most certainly of interest on its own, and can be applied in many different 
contexts. 
%%Although it is hard to believe that such a result is new, we 
%%did not find any reference about it. Thus 

\medskip\noindent
{\bf Lemma \ref{lem:main}.} \em Assume furthermore that,
either $W_i$ contains the origin $\bbo\in \R^{q_i}$ for $i=1,2$ and 
$\Phi(\bbo) = \bbo$, or $\bbo\notin 
W_i$ for $i=1,2$.

The homeomorphism $\iPhi$ is bi-Lipschitz
if and only $\Phi$ is. 
Moreover, if $W_1$ is unbounded, then $\iPhi$ extends bi-Lipschitz-ly at 
$\bbo$ taking the value $\bbo$. \em

\bigskip
Our interest in this problem arose from results of the recent PhD 
Thesis of the second named author \cite{Oli}. Indeed, the bi-Lipschitz 
classification of local plane objects germs, at the origin, respectively at 
infinity and in correspondence by the Euclidean inversion, presented 
strikingly similar properties, now explained by Lemma \ref{lem:main}.
\\
The main result is a convenient reformulation
of the inversion lemma, and is also very much in tune with the joint works 
of the first named author \cite{CoGrMi1,CoGrMi2} which expand 
the results of the recent PhD Thesis~\cite{Cos}.

\bigskip
The paper is organised as follows: Section 2 introduces various 
preliminary materials and notations. Section \ref{section:IMaGbLH}
presents the special case of a global bi-Lipschitz homeomorphism of $\Rq$. 
Section 
\ref{section:IaGobLHaI} and Section \ref{section:IaGobLHa0}, respectively, 
show germ-ified versions of the inversion lemma, namely Lemma 
\ref{lem:magic-infinity} at $\infty$ and, respectively  Lemma 
\ref{lem:magic-origin} at $\bbo$. Section 
\ref{section:IabLH} is the short proof of our main tool, the inversion lemma 
\ref{lem:main}. The main result is dealt with in Section \ref{section:MR}.
The last section presents two immediate applications, 
versions at infinity of two results about germs of definable subsets at 
the origin: Proposition \ref{prop:tangent-cones-infty} (Sampaio's tangent 
cone result \cite{Sam}) and Proposition \ref{prop:bilip-dist-pres-infty} 
(Valette's Links preserving re parametrization of definable bi-Lipschitz 
homeomorphism \cite{Val2}).
%
%
%
%
%
%
%
%
%
%       *****************************************************************
%
%
%
%
%
%
%
%
%
%
%
\section{Preliminaries}\label{section:P}
\subsection{Notations}\label{subsec:notations}
$ $

The Euclidean space $\Rq$ is equipped with the Euclidean distance, denoted
$|-|$. We denote by $B_r^q$ the open ball of $\Rq$ of radius $r$ and centred 
at the origin $\bbo$, by $\bB_r^q$ its closure and by $\bS_r^{q-1}$ its 
boundary. The open ball of radius $r$ and with centre $\bx_0\in\Rq$ is 
$B^q(\bx_0,r)$, its closure is $\bB^q(\bx_0,r)$ and $\bS^{q-1}(\bx_0,r)$ is 
its boundary. The unit sphere $\bS_1^{q-1}$ is simply denoted by $\bS^{q-1}$.

If $S$ is any subset of $\Rq$, its closure in $\Rq$ is $\clos(S)$, and $S^*$ 
is $S\setminus \bbo$.

\bigskip
Let $\bU_q$ be the punctured affine space $\Rq^*$.

Compactifying the space $\Rq$ with the point $\infty$ at infinity as
$$
\Rqbar := \Rq\sqcup \infty = \bbo \sqcup \bU_q \sqcup \infty
$$
yields a space that is smoothly diffeomorphic to the unit 
sphere $\bS^q$ of $\R^{q+1}$, using the stereographic projections centred at
the "north" and "south" poles of $\bS^q$. Under this correspondence, the 
points $\bbo$ and $\infty$ are antipodal.

If $S$ is any subset of $\Rq$ its closure in $\Rqbar$ is $\overline{S}$.
Thus $S$ is unbounded if and only if $\overline{S} = \clos(S) \cup \infty$.

\bigskip
The germ $(\Rq,\infty)$ of $\Rq$ at infinity  is well defined, and can be 
considered as a germ in $\Rq$ as well as in $\Rqbar$. 

Let $\gm$ be a point of $\Rqbar$. Let $(\bx_n)_n, (\by_n)_n$ be two sequences 
of $\Rq$ converging to $\gm$ in $\Rqbar$. Let $\bz_n$ be $\bx_n$ or $\by_n$ 
and let 
$$
z_n := 
\left\{
\begin{array}{rcl}
|\bz_n - \gm| & {\rm if} & \gm \in \Rq \\
|\bz_n| & {\rm if} & \gm = \infty
\end{array}
\right.
$$
We will use the following notation
$$
\bx_n \sim \by_n  \; \Longleftrightarrow \; \frac{x_n}{y_n} 
\in [a,b] \;\; {\rm for} \;\; a,b>0 \;\; {\rm whenever} \;\; n\gg 1,
$$
as well as the next one 
$$
\bx_n \; \lsm \; \by_n  \; \Longleftrightarrow \; \frac{x_n}{y_n} 
\in [0,a] \;\; {\rm for} \;\; a>0 \;\; {\rm whenever} \;\; n\gg 1,
$$
and the last one 
$$
\bx_n = o(\by_n)  \; \Longleftrightarrow \; \lim_n \frac{x_n}{y_n} = 0.  
$$

\smallskip
\subsection{On affine subsets}
$ $ 

Any non-empty subset $S$ of $\Rq$ inherits from the ambient Euclidean 
structure of $\Rq$ the \em outer metric space structure \em $(S,d_S)$, where 
$$
d_S(\bx,\bx') : = |\bx - \bx'|
$$ 
for any pair of points $\bx,\bx'$ of $S$. 
%%Whenever $S$ is compact, the metric space $(S,d_S)$ is complete.
We recall that if a mapping 
$\vp: (S,d_S) \to \Rp$ is Lipschitz with Lipschitz constant $C$ it extends 
as a Lipschitz mapping $(\clos(S),d_{\clos(S)}) \to \Rp$ with the same
Lipschitz constant $C$. In practise we can assume that $S$ is closed in $\Rq$.

In order to ease accumulation of hypotheses and notations, we introduce the 
following
\begin{definition}\label{def:q-affine}
A \em $q$-affine subset \em is a non-empty closed subset of $\Rq$ with $q\geq 1$. 

An \em affine subset \em is a $q$-affine subset for some positive integer $q$.

\end{definition}
Since any affine subset $S$ is equipped with the outer metric space 
structure $(S,d_S)$ described above, we introduce the following
\begin{definition}\label{def:affine-lip}
A Lipschitz mapping $S \to T$ between the affine subsets $S,T$ is a Lipschitz 
mapping $(S,d_S) \to (T,d_T)$. 
\end{definition}

\medskip
\subsection{On the inversion}
$ $

The inversion of the (punctured) affine space $\Rq$, defined as
$$
\iota_q : \bU_q \to \bU_q, \;\; \bx \mapsto \frac{\bx}{|\bx|^2}
$$
is a $C^\infty$ (semi-algebraic) diffeomorphism, and extends as a (semi-algebraic) homeomorphism ($C^\infty$ actually) over $\Rqbar$ exchanging the origin 
$\bbo$ and the point at infinity $\infty$. 
%%(After identifying $\Rqbar$ and $\bS^q$ using the stereographic projection, 
%%this extension of $\iota_q$ is simply the restriction to $\bS^q$ of the 
%%orthogonal symmetry of $\R^{q+1}$ w.r.t. $\R^q\times 0$).

\medskip
Let $\bx$ be any point of $\bU_q$. Let 
$$
R(\bx) := \R\bx 
%%= \R\dd_r(\bx) 
$$ be the real vector line through 
%%$\bx$ and where $\dd_r(\bx)$ is the radial vector field at 
$\bx$.  
The tangent space of $\bU_q$ at $\bx$ decomposes as the Euclidean orthogonal sum
$$
T_\bx \bU_q = R(\bx) \oplus \bS(\bx) \;\; {\rm where} \;\; 
\bS(\bx) := T_\bx \bS_R^{q-1}.
$$
Observe that $\bS(s \bx) = \bS(\bx)$ and $R(s\bx) = R(\bx)$, 
as vector subspaces of $\Rq$, whenever $s\neq 0$.
An elementary computation shows that in the previous orthogonal basis of 
$T_\bx \bU_q$ we obtain  
$$
D_\bx \iota_q := -\frac{1}{|\bx|^2}Id_{R(\bx)} \oplus 
\frac{1}{|\bx|^2}Id_{\bS(\bx)} = \frac{1}{|\bx|^2}\left[ - Id_{R(\bx)} \oplus 
Id_{\bS(\bx)}\right]
$$
In particular $D_\bx\iota_q$ is an orthogonal mapping, since $|\bx|^2 
D_\bx\iota_q$ is simply the orthogonal symmetry w.r.t. the hyperplane
$\bS(\bx)$. We thus deduce the (Euclidean) norm of $D_\bx\iota_q$
\begin{equation}\label{eq:norm-inversion}
\|D_\bx \iota_q \| = \frac{1}{|\bx|^2}.
\end{equation}

\medskip
\subsection{Elementary, yet very useful, identities}
$ $

\smallskip
We recall the following known estimates
\begin{claim}\label{claim:lip-1}
Let $\bx,\bx' \in \Rq$ and $C>0$ such that $|\bx'| \geq (1+C) |\bx|$. Then
$$
\frac{C}{1+C}|\bx'| \, \leq \, |\bx' - \bx| \, \leq \, \frac{2+C}{1+C}|\bx'|.
$$
\end{claim}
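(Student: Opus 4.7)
The plan is to derive both inequalities directly from the triangle inequality combined with the hypothesis, which (after dividing by $1+C$) reads $|\bx| \leq \frac{1}{1+C}|\bx'|$. This hypothesis is the sole ingredient needed, since once we have a sharp bound on $|\bx|$ in terms of $|\bx'|$, both the upper and lower estimates on $|\bx' - \bx|$ fall out of the standard two-sided triangle inequality $||\bx'| - |\bx|| \leq |\bx' - \bx| \leq |\bx'| + |\bx|$.

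First I would derive the upper bound. Using the triangle inequality $|\bx' - \bx| \leq |\bx'| + |\bx|$ and substituting the hypothesis yields
\[
|\bx' - \bx| \;\leq\; |\bx'| + \frac{1}{1+C}|\bx'| \;=\; \frac{2+C}{1+C}|\bx'|,
\]
which is exactly the right-hand inequality. Then I would derive the lower bound. Since $|\bx'| \geq (1+C)|\bx| \geq |\bx|$ (as $C>0$), the reverse triangle inequality gives $|\bx' - \bx| \geq |\bx'| - |\bx|$, and again substituting the hypothesis yields
\[
|\bx' - \bx| \;\geq\; |\bx'| - \frac{1}{1+C}|\bx'| \;=\; \frac{C}{1+C}|\bx'|,
\]
which is the left-hand inequality.

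There is essentially no obstacle here: this is a bookkeeping lemma packaging the observation that if $\bx'$ is strictly further from $\bbo$ than $\bx$ by a factor $1+C$, then $|\bx' - \bx|$ is comparable to $|\bx'|$ with explicit constants depending only on $C$. The only thing worth verifying is that the two resulting constants $\frac{C}{1+C}$ and $\frac{2+C}{1+C}$ are both strictly positive (clear since $C>0$), so the estimate is non-degenerate and usable to compare $|\bx'-\bx|$ and $|\bx'|$ up to multiplicative constants. This precise comparability is presumably what will be exploited later, together with \eqref{eq:norm-inversion}, to control the behaviour of $\iota_q$ near $\infty$ in the proofs of the inversion lemma.
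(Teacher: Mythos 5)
Your proof is correct and complete: both inequalities follow exactly as you say from the two-sided triangle inequality together with the hypothesis $|\bx| \leq \frac{1}{1+C}|\bx'|$. The paper itself states this claim as a recalled ``known estimate'' and gives no proof, so your argument is precisely the standard one the authors had in mind.
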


\bigskip
\noindent
Given $\bx_1,\bx_2 \in \bU_q$, we define 
$$
e:= |\bx_1 - \bx_2|, \;\; r_i:= |\bx_i|, \;\; \by_i := \iota_q(\bx_i), \;\;
E:= |\by_1 - \by_2| \;\; {\rm and} \;\; R_i := |\by_i|,
\;\;  i=1,2.
$$ 
We assume that $r_1 = (1+C)r_2$ for some $C\geq 0$. Let $2\tht\in[0,\pi]$ be 
the angle between $\bx_1,\bx_2$ (thus between $\by_1,\by_2$ as well).
%%$a:=\cos(\tht)$ and $b =\sin(\tht)$. 
Then $r_1 -r_2 = Cr_2$ and $R_2 - R_1 = 
CR_1$. We recall that \em the law of cosines \em is the following identity 
\begin{equation}\label{eq:law-cosine}
e^2  = (r_1 - r_2)^2 \cos^2\tht + (r_1+r_2)^2 \sin^2\tht = 
%%(2br_2 + b C r_2)^2 + (a C r_2)^2 = 
r_2^2[C^2 +  4 (1+C)\sin^2\tht] .
\end{equation}
%
%
%%Observe that $e = C r_2$ if and only if $\tht = 0$ and $e = (2+C)r_2$
%%if and only if $\tht = \pi$. 
The inversion and the law of cosines give the following identity 
\begin{equation}\label{eq:e-E}
E = R_1R_2 \cdot e \;\; {\rm and} \;\; e = r_1 r_2 \cdot E.
\end{equation}
\section{Inversion mapping and global bi-Lipschitz homeomorphisms}\label{section:IMaGbLH}
\medskip
We present a special occurrence of the inversion lemma. 
Although it is likely that it has already been written in a few books, 
we give a proof, following from elementary Lipschitz analysis.

\medskip
Let $\cL(a,b)$ be the space of $\R$-linear mappings $\R^a\to\R^b$.

\medskip
Let $\vp:\Rp \to \Rq$ be a Lipschitz mapping with Lipschitz constant
$A_\vp$: 
$$
\bx,\bx'\in \Rp \; \Longrightarrow \; 
|\vp (\bx) - \vp(\bx')| \; \leq \; 
A_\vp\cdot |\bx - \bx'|.
$$
Let $\cD(\vp)$ be the set of points where $\vp$ is differentiable. Rademacher 
Theorem states that the complement $\Rp\setminus \cD(\vp)$ is of null 
measure \cite{Rad,Hei}. We consider the following closed subset
$$
\Dlt(\vp) := \clos\left(\left\{(\bx,D_\bx \vp) \in \cD(\vp) \times 
\cL(q,p)\right\}\right) \subset \Rp\times \cL(p,q).
$$
Let $\pi_\cL : \Rp\times \cL(p,q)\to\cL(p,q)$ be the projection onto the second
factor. Let 
$$
\cL(\vp) := \pi_\cL(\Dlt).
$$
For any $\bx\in \cD (\vp)$ the Lipschitz
condition on $\vp$ yields the following estimate about the norm
of $D_\bx \vp$
$$
\|D_\bx \vp\| \,  \leq  \, A_\vp.
$$
Since the norm is continuous over $\cL(q,p)$, 
we deduce that that
\begin{equation}\label{eq:norm-lipschitz}
L\in \cL(\vp) \; \Longrightarrow \; \|L \| \leq A_\vp.
\end{equation}

\bigskip
Let $H:\Rq \to \Rq$ be a bi-Lipschitz homeomorphism mapping
the origin onto itself, with Lipschitz constant $A_H>0$:
$$
\bx,\bx'\in \Rq \; \Longrightarrow \; 
\frac{1}{A_H}\cdot |\bx - \bx'| \; \leq \; |H(\bx) - H(\bx')| \; \leq \; 
A_H\cdot |\bx - \bx'|.
$$
Therefore from Estimate \eqref{eq:norm-lipschitz} we get
$$
L\in \cL(H) \; \Longrightarrow \; \frac{1}{A_H} \; \leq \; \| L \| \; \leq 
\; A_H.
$$
The mapping $H$ extends as a homeomorphism of $\Rqbar$ mapping $\infty$ onto 
$\infty$. 

\bigskip
The \em inversion of $H$ \em is the mapping $\iota_q \circ H \circ 
\iota_q^{-1}$. It is a homeomorphism of $\bU_q$ which extends continuously
to $\bbo$, taking the value $\bbo$, as the homeomorphism $\iota(H) :\Rq \to 
\Rq$. More precisely the following holds true
\begin{proposition}\label{prop:inversion-global}
The inversion $\iota(H)$ of $H$ is bi-Lipschitz.
\end{proposition}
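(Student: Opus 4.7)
\medskip\noindent
\textbf{Proof plan.} The plan is to compute the ratio $|\iota(H)(\bx)-\iota(H)(\bx')|/|\bx-\bx'|$ directly, using that $\iota_q$ is an involution. Since $\iota_q^{-1}=\iota_q$, we have $\iota(H)=\iota_q\circ H\circ \iota_q$ on $\bU_q$. For $\bx,\bx'\in\bU_q$ set $\bz:=\iota_q(\bx)$, $\bz':=\iota_q(\bx')$, both in $\bU_q$. Because $H$ is a homeomorphism with $H(\bbo)=\bbo$, we have $H(\bz),H(\bz')\in\bU_q$, so applying identity \eqref{eq:e-E} (or just the defining formula of $\iota_q$) to the pair $H(\bz),H(\bz')$ gives
$$
|\iota(H)(\bx)-\iota(H)(\bx')|=|\iota_q(H(\bz))-\iota_q(H(\bz'))|=\frac{|H(\bz)-H(\bz')|}{|H(\bz)|\cdot|H(\bz')|},
$$
and similarly $|\bx-\bx'|=|\bz-\bz'|/(|\bz||\bz'|)$.

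\medskip\noindent
Next, I would form the quotient and split it into three manageable factors:
$$
\frac{|\iota(H)(\bx)-\iota(H)(\bx')|}{|\bx-\bx'|}
\;=\;
\frac{|H(\bz)-H(\bz')|}{|\bz-\bz'|}\;\cdot\;\frac{|\bz|}{|H(\bz)|}\;\cdot\;\frac{|\bz'|}{|H(\bz')|}.
$$
The first factor lies in $[A_H^{-1},A_H]$ by the bi-Lipschitz property of $H$. To bound the remaining two factors, I would use $H(\bbo)=\bbo$: applying the bi-Lipschitz inequality to the pair $(\bz,\bbo)$ yields $A_H^{-1}|\bz|\le |H(\bz)|\le A_H|\bz|$, hence $|\bz|/|H(\bz)|\in[A_H^{-1},A_H]$, and similarly for $\bz'$. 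Multiplying, the full quotient lies in $[A_H^{-3},A_H^3]$, which means that $\iota(H)$ is bi-Lipschitz on $\bU_q$ with constant $A_H^3$.

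\medskip\noindent
Finally, to justify the extension at $\bbo$: the estimate just obtained holds on the dense open subset $\bU_q$ of $\Rq$. In particular, for any sequence $\bx_n\to\bbo$ in $\bU_q$ the inequality $|\iota(H)(\bx_n)|\le A_H^{3}|\bx_n|$ forces $\iota(H)(\bx_n)\to\bbo$. Setting $\iota(H)(\bbo):=\bbo$ therefore produces a continuous extension to all of $\Rq$, and both bi-Lipschitz inequalities pass to the closure by continuity, giving a global bi-Lipschitz homeomorphism of $\Rq$ with the same constant $A_H^3$.

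\medskip\noindent
The argument is essentially a bookkeeping exercise once the involution identity and the multiplicative formula \eqref{eq:e-E} are in hand, so I do not anticipate a serious obstacle. The only point to watch is the role of the hypothesis $H(\bbo)=\bbo$, which is precisely what controls the size ratio $|\bz|/|H(\bz)|$ and thereby prevents $\iota(H)$ from blowing up or vanishing near $\bbo$; without it the conclusion would fail.
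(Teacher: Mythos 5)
Your proof is correct, but it takes a genuinely different route from the paper's. The paper proves the proposition via Rademacher's theorem: it observes that $\iota(H)$ is differentiable off a null set, bounds $\|D_\by\, \iota(H)\|$ by $A_H^3$ using the chain rule, the norm formula \eqref{eq:norm-inversion}, and the estimate $|H(\bx)|\geq |\bx|/A_H$ coming from $H(\bbo)=\bbo$, and then concludes that a map with almost-everywhere bounded derivative is Lipschitz. You instead apply the exact metric identity behind \eqref{eq:e-E}, namely $|\iota_q(\bu)-\iota_q(\bv)|=|\bu-\bv|/(|\bu|\,|\bv|)$, twice (once to the pair $H(\bz),H(\bz')$ and once to $\bz,\bz'$), which factors the distortion ratio into three terms each lying in $[A_H^{-1},A_H]$; the hypothesis $H(\bbo)=\bbo$ enters at exactly the same point in both arguments, to control $|\bz|/|H(\bz)|$. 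Both approaches produce the same constant $A_H^3$. Yours is the more elementary of the two: it uses no measure theory, no almost-everywhere differentiability, and avoids the (slightly delicate) step of passing from an a.e.\ derivative bound to a global Lipschitz bound on the non-convex set $\bU_q$. It also applies verbatim to restrictions of $H$ to arbitrary closed subsets containing the origin, which is essentially the situation of the later Lemmas \ref{lem:magic-origin} and \ref{lem:magic-infinity}, whereas the Rademacher argument genuinely needs an open domain. Your handling of the continuous extension at $\bbo$ is also correct.
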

\begin{proof}
It is enough to show that $\iota(H)$ is Lipschitz, since $\iota(H^{-1}) = 
\iota(H)^{-1}$. Let $\by\in \bU_q$ and 
let $\bx := \iota_q^{-1}(\by)$ and $\bz := H (\bx)$. Since $H(\bbo) = \bbo$, 
we find
$$
\frac{1}{A_H \cdot |\by|} \; \leq \; |\bz| \; \leq \; 
\frac{A_H}{|\by|}
$$
If $\bx$ is a point of $\bU_q$ at which $H$ is differentiable, we find 
the following estimate
$$
\|D_\by \iota (H)\| \; \leq \; \frac{1}{|\bz|^2}\cdot A_H \cdot 
\frac{1}{|\by|^2}
\; \leq \; A_H^3.
$$
Since $\iota_q$ is a $C^\infty$ diffeomorphism, the subset 
$\iota_q(\Rq \setminus \cD(H))$ has null measure. Thus $\iota(H)$ is 
differentiable outside a subset of zero measure with uniformly bounded
first derivatives. Thus it is Lipschitz.
\end{proof}
Let $\omg$ be either $\bbo$ or $\infty$. Let $h :(\Rqbar,\omg) \to 
(\Rqbar,\omg)$ be a germ of homeomorphism which is bi-Lipschitz over
$(\bU_q,\omg)$. Let $\omg^*$ be the point of $\Rqbar$ antipodal to 
$\omg$, that is
$$
\{\omg,\omg^*\} = \{\bbo,\infty\}.
$$
The map germ $\iota_q\circ h \circ\iota_q^{-1} : (\bU_q,\omg^*) \to 
(\bU_q,\omg^*)$ extends as a homeomorphism germ $\iota(h) : (\Rqbar,
\omg^*) \to (\Rqbar,\omg^*)$. A consequence of Proposition 
\ref{prop:inversion-global} is the (now expected) following result, 
initial motivation of the paper:
\begin{corollary}\label{cor:inversion-global-germs}
The germ of homeomorphism $\iota(h)$ is bi-Lipschitz over
$(\bU_q,\omg^*)$.
\end{corollary}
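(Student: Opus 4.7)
My plan is to derive Corollary~\ref{cor:inversion-global-germs} by localizing the argument used in the proof of Proposition~\ref{prop:inversion-global}. The two cases $\omg=\bbo$ and $\omg=\infty$ will share the same derivative estimate, so I would treat them in parallel. Fix a representative of $h$ that is bi-Lipschitz with constant $A_h$ on $V\cap\bU_q$, where $V\subset\Rqbar$ is a neighborhood of $\omg$; for $\bx\in V\cap\bU_q$ write $\bz:=h(\bx)$.

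The first task is to produce a uniform bound of the form $|\bz|\ge|\bx|/\kappa$ for some $\kappa>0$ on $V\cap\bU_q$, possibly after shrinking $V$. If $\omg=\bbo$, this is immediate from $h(\bbo)=\bbo$ and the bi-Lipschitz inequality, with $\kappa=A_h$. If $\omg=\infty$, one fixes a base point $\bx_0\in V\cap\bU_q$, applies the bi-Lipschitz inequality to the pair $(\bx,\bx_0)$, and uses $|\bx-\bx_0|=(1+o(1))|\bx|$ as $|\bx|\to\infty$, together with the extension condition $h(\infty)=\infty$, to conclude $|\bz|\sim|\bx|$ on a smaller neighborhood of $\infty$, and hence to extract such a $\kappa$.

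Next, for $\by$ in a suitable punctured neighborhood of $\omg^*$, set $\bx:=\iota_q^{-1}(\by)\in V\cap\bU_q$ and $\bz:=h(\bx)$. At any $\bx$ where $h$ is differentiable, the chain rule combined with~\eqref{eq:norm-inversion} applied at both $\by$ and $\bz$, and the identity $|\by|=1/|\bx|$, yields
$$
\|D_\by \iota(h)\| \;\le\; \frac{1}{|\bz|^2}\cdot A_h\cdot |\bx|^2 \;\le\; A_h\,\kappa^2.
$$
Because $\iota_q$ is a smooth diffeomorphism of $\bU_q$ and $h$ is differentiable off a null set by Rademacher, the map $\iota(h)$ is differentiable almost everywhere in this neighborhood with uniformly bounded derivative, and is therefore Lipschitz. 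Running the same argument for $h^{-1}$, which is also a bi-Lipschitz germ at $\omg$ fixing $\omg$, shows that $\iota(h^{-1})=\iota(h)^{-1}$ is Lipschitz, and hence $\iota(h)$ is bi-Lipschitz over $(\bU_q,\omg^*)$.

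The only non-routine step is the asymptotic $|\bz|\sim|\bx|$ in the case $\omg=\infty$, which substantively uses the hypothesis that $h$ extends continuously to $\infty$ with value $\infty$; everything else is a direct transcription of the proof of Proposition~\ref{prop:inversion-global} to the germ setting.
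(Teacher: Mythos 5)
Your proof is correct and follows essentially the same route as the paper: the Corollary is obtained by localizing the Rademacher--plus--derivative-norm argument of Proposition~\ref{prop:inversion-global} to a punctured neighbourhood of $\omg$, the only new ingredient being the two-sided estimate $|h(\bx)|\sim|\bx|$ near $\omg$ (which the paper also asserts, in the same form, at the start of Sections~\ref{section:IaGobLHaI} and~\ref{section:IaGobLHa0}). Your write-up in fact supplies the localization details the paper leaves implicit, at the same level of rigour as the proof of Proposition~\ref{prop:inversion-global} itself.
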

\begin{remark}\label{rmk:rademacher}
The proof of Proposition \ref{prop:inversion-global} we gave uses Rademacher 
Theorem, and is a direct proof. But this result is a special case of Lemma 
\ref{lem:main}, whose demonstration, although longer and mostly by absurd, 
uses even more elementary arguments. 
\end{remark}
%
%
%
%
%
%
%
%
% 
%
%
%
%
%
%
%
%
%
%
%
%
%
%       *****************************************************************
%
%
%
%
%
%
%
%
%
%
%
%
%
%
%
%
%
%
%
%
\section{Inversion and germs of bi-Lipschitz homeomorphisms at infinity }\label{section:IaGobLHaI}

Let $\sgm$ be a point of $\Rqbar$. Following Definition \ref{def:q-affine} 
the notion of germ of $q$-affine subset at $\sgm$ is well defined. If $\tau$ is
a point of $\Rpbar$, the notion of Lipschitz mapping of affine germs $(S,\sgm) 
\to (T,\tau)$ is also well defined by Definition \ref{def:affine-lip}.

\bigskip
Let $\phi:(Y_1,\infty) \to (Y_2,\infty)$ be a germ of bi-Lipschitz homeomorphism
between $q_i$-affine subsets germs $(Y_i,\infty)$ with $i=1,2$. 
There exists a positive constant $A_\phi$ such that
$$
\by,\by'\in Y_1 \; \Longrightarrow \; 
\frac{1}{A_\phi} \cdot|\by'-\by| \, \leq 
\, |\phi(\by') - \phi(\by)| \, \leq \, A_\phi \cdot|\by' - \by|,
%%\left\{
%%\begin{array}{l}\frac{1}{A_\phi} \cdot|\by'-\by| \, \leq \, 
%%|\phi(\by') - \phi(\by)| \, \leq \, A_\phi \cdot|\by' - \by|,
%%\;\; {\rm and} 
%%\\  
%%\frac{1}{A_\phi} \cdot|\by| \, \leq \, |\phi(\by)| \, \leq \, A_\phi \cdot|\by|
%%\end{array}
%%\right.
$$
Thus we can assume that the Lipschitz constant $A_\phi$ is such that the following estimates are 
also satisfied 
$$
\by \in Y_1 \; \Longrightarrow \;  \frac{1}{A_\phi} \cdot|\by| \, \leq 
\, |\phi(\by)| \, \leq \, A_\phi \cdot|\by|.
$$
With the notations of Section \ref{subsec:notations}, we find $\phi(\by) \sim \by$. 
%%The homeomorphism 
%%$\phi$ extends continuously as a homeomorphism $(\barY_1,\infty) \to 
%%(\barY_2,\infty)$. We still denote this extension $\phi$.
%%
%%Assume that $Y_i$ is contained in $\R^{q_i}$, $i=1,2$. 
\\
For $i=1,2$, let $X_i$ be the closure $\clos(\iota_{q_i}(Y_i))$ of $\iota_{q_i}(Y_i)$ 
in $\Rq$. The \em inversion of $\phi$ \em is the mapping defined as follows  
$$
\iphi : (X_1,\bbo) \to (X_2,\bbo), \;\; \bx \to \iphi(\bx) := 
\left\{
\begin{array}{rcl}
\iota_{q_2} \circ \phi \circ \iota_{q_1}^{-1}(\bx) & {\rm if} & \bx \in X_1^*
\\
\bbo & {\rm if} & \bx = \bbo
\end{array}
\right.
$$
It is 
clearly a germ of homeomorphism which extends continuously at $\bbo$ taking
the value $\bbo$ at $\bbo$. 
%%We denote again this extension by $\iphi$. 
The homogeneity of the Euclidean metric as well
as the existence of the inversion mapping yield the following result.
%% germ-ified version of Corollary \ref{cor:invers-lip-homeo} 
%
%
\begin{lemma}\label{lem:magic-infinity}
If $\phi:(Y_1,\infty) \to (Y_2,\infty)$ is a bi-Lipschitz homeomorphism germ between 
$q_i$-affine subsets germs $(Y_i,\infty)$, for $i=1,2$, then its inversion 
$\iphi : (X_1,\bbo) \to (X_2,\bbo)$ is bi-Lipschitz homeomorphism germ.
\end{lemma}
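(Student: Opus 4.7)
The plan is to prove the Lipschitz estimate for $\iphi$ directly, by invoking the scaling identity \eqref{eq:e-E} on both sides of $\phi$; the reverse inequality then follows by applying the same argument to the bi-Lipschitz inverse $\phi^{-1}:(Y_2,\infty)\to (Y_1,\infty)$, noting that $\iota(\phi^{-1}) = (\iphi)^{-1}$.

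First I would fix a single constant $A_\phi$ controlling both the bi-Lipschitz inequality for $\phi$ and the equivalence $|\phi(\by)|\sim|\by|$, exactly as stated in the paragraph preceding the lemma. Pick two points $\bx_1,\bx_2\in X_1^*$ close to $\bbo$, and set $\by_i := \iota_{q_1}^{-1}(\bx_i)\in Y_1$ and $\bz_i := \phi(\by_i)\in Y_2$, so that $\iphi(\bx_i) = \iota_{q_2}(\bz_i)$. The inversion identity $|\iota_{q}(\bu)| = 1/|\bu|$ gives $|\by_i| = 1/|\bx_i|$, hence the bi-Lipschitz hypothesis yields
\[
\tfrac{1}{A_\phi\,|\bx_i|} \;\leq\; |\bz_i| \;\leq\; \tfrac{A_\phi}{|\bx_i|}, \qquad |\iphi(\bx_i)| = \tfrac{1}{|\bz_i|} \;\leq\; A_\phi\, |\bx_i|.
\]
The latter already furnishes a Lipschitz extension at $\bbo$ taking value $\bbo$.

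The main step is the pairwise distance estimate. Applying the identity \eqref{eq:e-E} to $\iota_{q_1}$ and to $\iota_{q_2}$ respectively gives
\[
|\by_1 - \by_2| \;=\; \frac{|\bx_1 - \bx_2|}{|\bx_1|\,|\bx_2|}, \qquad |\iphi(\bx_1) - \iphi(\bx_2)| \;=\; \frac{|\bz_1 - \bz_2|}{|\bz_1|\,|\bz_2|}.
\]
Combining the bi-Lipschitz bound $|\bz_1 - \bz_2|\le A_\phi |\by_1-\by_2|$ with the lower bound $|\bz_i|\ge 1/(A_\phi|\bx_i|)$ gives
\[
|\iphi(\bx_1) - \iphi(\bx_2)| \;\leq\; A_\phi\cdot \frac{|\bx_1 - \bx_2|}{|\bx_1|\,|\bx_2|}\cdot A_\phi^2\, |\bx_1|\,|\bx_2| \;=\; A_\phi^{3}\, |\bx_1 - \bx_2|.
\]
Thus $\iphi$ is Lipschitz on $X_1^*$ near $\bbo$, and the estimate extends by continuity to the germ $(X_1,\bbo)$.

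For the lower inequality, apply the same argument verbatim to $\phi^{-1}$, which is bi-Lipschitz with the same constant $A_\phi$; since $\iota(\phi^{-1}) = (\iphi)^{-1}$, this shows $(\iphi)^{-1}$ is Lipschitz with constant $A_\phi^{3}$, completing the proof. There is no genuine obstacle: the whole content of the lemma reduces to the observation that the factor $|\bx_1||\bx_2|$ produced by inverting on the source is \emph{exactly} cancelled by the factor $|\bz_1||\bz_2| \sim 1/(|\bx_1||\bx_2|)$ produced by inverting on the target, a cancellation made quantitative by \eqref{eq:e-E}.
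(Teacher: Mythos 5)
Your proof is correct, but it takes a genuinely different and considerably shorter route than the paper's. The paper argues by contradiction: it assumes the Lipschitz ratio of $\iphi$ blows up along a pair of sequences and then eliminates five cases (limits away from $\bbo$, one limit at $\bbo$, non-comparable norms, separated angles, and the remaining degenerate case), invoking the law of cosines \eqref{eq:law-cosine} and Identity \eqref{eq:e-E} only in the later cases. You instead exploit that \eqref{eq:e-E} is an \emph{exact} identity, $|\iota_q(\bx_1)-\iota_q(\bx_2)| = |\bx_1-\bx_2|/(|\bx_1|\,|\bx_2|)$, apply it once on the source and once on the target, and let the factor $|\bx_1|\,|\bx_2|$ coming from the source inversion cancel against $1/(|\bz_1|\,|\bz_2|) \leq A_\phi^2\, |\bx_1|\,|\bx_2|$ coming from the target inversion; this yields the explicit constant $A_\phi^{3}$ in one line, the extension at $\bbo$ via $|\iphi(\bx)|\leq A_\phi |\bx|$, and the lower bound by the symmetry $\iota(\phi^{-1})=\iphi^{-1}$, which is the same reduction the paper uses. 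What your computation buys is brevity, an effective Lipschitz constant, and the absence of any subsequence extraction or case discussion; the paper's sequence-based template is reused almost verbatim for Lemma \ref{lem:magic-origin} and for the gluing argument in Lemma \ref{lem:main}, but your identity-based argument would simplify those equally. The only point worth making explicit is that you must fix a representative of $\phi$ defined on all of $Y_1$ outside a compact set, so that both $\by_1,\by_2$ lie in its domain and the normalization $\tfrac{1}{A_\phi}|\by|\leq|\phi(\by)|\leq A_\phi|\by|$ holds there; this is harmless since the statement concerns germs at $\infty$.
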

\begin{proof}
First let us denote $h := \phi \circ \iota_{q_1}^{-1}$, that is
$$
h(\bx) = \phi\left(\frac{\bx}{|\bx|^2}\right).
$$
Therefore we get that
$$
\iphi(\bx) = \frac{h(\bx)}{|h(\bx)|^2}.
$$
Since $|h(\bx)| \sim |\bx^{-1}|$, we observe that $\iphi(\bx) \sim \bx$, 
more precisely:
$$
\frac{1}{A_\phi^3}\cdot |\bx| \, \leq \, |\iphi(\bx)| \,  \leq \, A_\phi^3 
\cdot|\bx|.
$$
It is sufficient to show that $\iphi$ is Lipschitz.

\medskip
Assume that $\iphi$ is not 
Lipschitz. Therefore there exist two sequences 
$(\bx_n)_n$ and $(\bx_n')_n$ of $\bU_{q_1}$ such that 
\begin{equation}\label{eq:not-lip}
\lim_n \frac{|\iphi(\bx_n) - \iphi(\bx_n')|}{|\bx_n - \bx_n'|} = \infty.
\end{equation}
We work with a representative of $\phi$ outside a compact subset $C_1$ of 
$\R^{q_1}$ containing $\bbo$ and with the representative of $\iphi$ over 
$X_1$, 
the closure $\clos(\iota_{q_1}(Y_1 \setminus C_1))$. Thus $X_1$ is 
compact.    
For convenience sake let 
$$
\by_n := \iota_{q_1} (\bx_n) \;\; {\rm and} \;\; \by_n' := 
\iota_{q_1} (\bx_n').
$$
We further define the following numbers:
$$
e_n := |\bx_n - \bx_n'|, \;\; 
%%E_n := |\by_n - \by_n'|, \;\;
\iphi_n := |\iphi(\bx_n) - \iphi(\bx_n')|, 
\;\; t_n := |\bx_n|, \;\;  t_n' := |\bx_n'|, 
\;\; s_n := |\by_n| \;\;{\rm and} \;\; s_n' := |\by_n'|.
$$
Of course we have $s_n t_n = s_n' t_n' = 1$. 

\smallskip Without loss of generality, we can assume that the sequence 
$(\bx_n)_n$ converges to $\chi \in X_1$ and $(\bx_n')_n$ converges to 
$\chi'\in X_1$.

\smallskip\noindent
$\bullet$ {\bf Case 1.} \em $\chi \neq \bbo$ and $\chi' \neq \bbo$. \em
%%$\liminf_n \min (|\bx_n|,|\bx_n'|) \, > 0$.

In other words there exists a compact subset $K_1$ of $\bU_{q_1}$ which
contains $\bx_n,\bx_n'$ for all $n$. Since the inversion $\iota_{q_1}$ is 
bi-Lipschitz over $K_1$, so is the mapping $\iphi$, contradicting the 
estimate  \eqref{eq:not-lip}.
Therefore this case cannot happen and we can assume that $\chi = \bbo$.

\smallskip\noindent
$\bullet$ {\bf Case 2.} \em $\chi = \bbo$ and $\chi'\neq \bbo$. \em
%%$\bx_n \to \bbo$ and $\liminf_n|\bx_n'|>0$.

Observe that the following estimates hold true
$$
e_n = t_n' + o(t_n'), \;\; |\iphi(\bx_n)| \sim t_n \to 0, \;\;
{\rm and} \;\; |\iphi(\bx_n')| \sim t_n' \in [a,b] 
$$ 
for positive real numbers $b>a$. Therefore we deduce that 
$$
\iphi_n \sim t_n'
$$
contradicting the estimate \eqref{eq:not-lip}. Therefore this case cannot 
happen and thus $\chi' = \bbo$ as well.

\smallskip\noindent
$\bullet$ {\bf Case 3.} \em $\chi = \chi' = \bbo$ and there exists $B>1$
such that $|\bx_n'|\geq B|\bx_n|$ for $n$ large enough. \em

For $n$ large enough, Claim \ref{claim:lip-1} yields 
$$
\frac{e_n}{t_n'} \in \left[\frac{B-1}{B},\frac{B+1}{B}\right].
$$
Since $\iphi_n \leq (1+B) A_\phi^3 t_n'$ for large $n$, we produce again
a contradiction to the estimate \eqref{eq:not-lip}. This case does not
occur and we can assume, up to taking a subsequence that $\frac{t_n}{t_n'} 
\to 1$ as $n$ goes to $\infty$.

\smallskip\noindent
Let $2\tht_n\in [0,\pi]$ be the angle between the vectors $\bx_n$ and $\bx_n'$.

\smallskip\noindent
$\bullet$ {\bf Case 4.} \em $\lim_n\frac{|\bx_n|}{|\bx_n'|} = 1$ and 
$\liminf_n 2\tht_n \in]0,\pi]$. \em

We can assume that $2\tht_n \geq 2\tht \in ]0,\pi]$. From Identity 
\eqref{eq:law-cosine} we deduce that
$$
\lim_n\frac{e_n}{t_n} \geq 2 \sin \tht > 0.
$$
Since $\iphi_n \leq A_\phi^3(t_n+t_n')$ for $n$ large enough, estimate 
\eqref{eq:not-lip} cannot be satisfied and thus $\tht = 0$.

Up to passing to subsequences,  we can assume that $(\tht_n)_n$ converges to $0$. 

\smallskip\noindent
$\bullet$ {\bf Case 5.} \em $\lim_n\frac{|\bx_n|}{|\bx_n'|} = 1$ and 
$\lim_n \tht_n =0$. \em
 
We can assume that $t_n' \geq t_n$ and $\bx_n' = \bx_n + \bz_n$ so that
$$
\ups_n :=\frac{|\bz_n|}{t_n'} = s_n' e_n\to 0 \;\; {\rm as} \;\;  n \to \infty, 
$$
Let $a_n := \cos(\tht_n)$, $b_n:= \sin(\tht_n)$ and $\dlt_n t_n := t_n' - t_n$. 
We get
$$
|\bz_n|^2 = e_n^2 = (2b_n t_n + \dlt_n t_n b_n)^2 + (\dlt_n t_n a_n)^2 
= t_n^2 \cdot[\dlt_n^2 + 4b_n^2 + o(b_n^2)].
$$
Since we can write $\by_n = \by_n' + \bw_n$, Equation \eqref{eq:e-E} 
yields $$
%%\by'_n - \by_n = (s_n')^2 \left[\bz_n - \dlt_n(2+\dlt_n)\bx_n\right]
|\bw_n| = s_ns_n' \cdot |\bz_n|
$$
and thus we deduce that
$$
\frac{|\bw_n|}{s_n} = \frac{|\bz_n|}{t_n'} = \ups_n
\to 0 \;\; {\rm as} \;\;  n \to \infty.
$$
%%and by definition of $\bw_n, \bz_n$ we also get the following estimate
%%$$
%%\frac{\chi_n}{\ups_n} \to 1 \;\; {\rm as} \;\;  n \to \infty.
%%$$
Since $\phi$ is bi-Lipschitz we obtain the following estimate
$$
h(\bx_n) = h(\bx_n') + s_n\bu_n, \;\; {\rm where} \;\;  |\bu_n| \sim \ups_n,
$$
from which we deduce 
$$
\frac{|h(\bx_n')|^2}{|h(\bx_n)|^2} \, = \, 1 + r_n
\;\; {\rm where} \;\; |r_n| \; \lsm \; \ups_n.
$$
Combining the various previous estimates yields the following one
$$
\iphi_n = \frac{1}{|h(\bx_n')|^2}\left|h(\bx_n') - (1+r_n)[h(\bx_n') + 
s_n\bu_n]\right| 
\;\lsm\; \frac{|r_n| + (1+r_n)|\bu_n|}{s_n} \sim t_n' \ups_n = e_n, 
$$
%%Since $e_n = t_n'\ups_n$, the last estimate about $\psi_n$ gives the next 
%%one
%%$$ 
%%\frac{\psi_n}{e_n} \; \lsm \; \frac{|r_n| + (1+r_n)|\bu_n|}{s_n'}
%%\frac{1}{t_n'\ups_n} \; \lsm \;  1
%%$$
contradicting estimate \eqref{eq:not-lip}. 
\end{proof}
%
%
%
%
%
%
%
%
%
%
%
%
%
%
%
%
%       *****************************************************************
%
%
%
%
%
%
%
%
%
%
%
\medskip
\section{Inversion and germs of bi-Lipschitz homeomorphism at $\bbo$ }\label{section:IaGobLHa0}

\medskip
This section is about the counterpart at the origin of the previous result at 
infinity Lemma \ref{lem:magic-infinity}, more precisely its converse.

\medskip
Let $\psi:(X_1,\bbo) \to (X_2,\bbo)$ be a germ of bi-Lipschitz homeomorphism
between $q_i$-affine subsets $X_i$, where $i=1,2$.
Thus, there exists a positive constant $A_\psi$ such that
$$
\bx,\bx' \; \Longrightarrow \; \frac{1}{A_\psi} \cdot |\bx'-\bx| \, \leq \, 
|\psi(\bx') - \psi(\bx)| \, \leq \, A_\psi \cdot |\bx' - \bx|.
$$
Thus the following estimates are also satisfied 
$$
\bx \in X_1 \; \Longrightarrow \;  \frac{1}{A_\psi} \cdot|\bx| \, \leq 
\, |\psi(\bx)| \, \leq \, A_\psi \cdot|\bx|,
$$
that is $\psi(\bx) \sim \bx$, with the previous notation. 
Denoting $Y_i := \iota_{q_i} (X_i^*)$ for $i=1,2$, the \em inversion of $\psi$ \em is the germ of mapping defined as follows:
$$
\ipsi : (Y_1,\infty) \to (Y_2,\infty), \;\; \by \to \ipsi(\by) := 
\iota_{q_2} \circ \psi \circ \iota_{q_1}^{-1}(\by).
$$
It clearly extends as germ of homeomorphism $(\overline{Y}_1,\infty) \to (\overline{Y}_2,\infty)$. 
%%This extension is denoted by $\ipsi$.

\medskip
The converse of Lemma \ref{lem:magic-infinity} is 
\begin{lemma}\label{lem:magic-origin}
If $\psi:(X_1,\bbo) \to (X_2,\bbo)$ is a bi-Lipschitz homeomorphism germ between 
$q_i$-affine subsetsgerms  $(X_i,\bbo)$, for $i=1,2$, then its inversion $\ipsi : 
(Y_1,\infty) \to (Y_2,\infty)$ is a bi-Lipschitz homeomorphism germ.
\end{lemma}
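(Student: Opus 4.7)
The plan is a direct computational proof using identity \eqref{eq:e-E}, which exactly encodes how $\iota_q$ distorts Euclidean distances. Together with the hypothesis $\psi(\bbo) = \bbo$, this identity collapses what could otherwise be a sequence-based case analysis like the one in Lemma \ref{lem:magic-infinity} into a one-line estimate; no contradiction argument is needed.

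First, I would fix a representative $\psi : X_1 \cap \bB_\rho^{q_1} \to X_2$ of the germ, on which $\psi$ is bi-Lipschitz with constant $A_\psi$. The corresponding representative of $\ipsi$ is then well-defined on $\iota_{q_1}(X_1^* \cap \bB_\rho^{q_1})$, which is the complement of a bounded set in $Y_1$, hence a representative of the germ at $\infty$. Next, for any $\by, \by'$ in this representative, I would set $\bx := \iota_{q_1}^{-1}(\by)$ and $\bx' := \iota_{q_1}^{-1}(\by')$, both lying in $X_1^* \cap \bB_\rho^{q_1}$, and apply identity \eqref{eq:e-E} once to the pair $(\bx,\bx')$ under $\iota_{q_1}$ and once to the pair $(\psi(\bx),\psi(\bx'))$ under $\iota_{q_2}$ to obtain
$$\frac{|\ipsi(\by) - \ipsi(\by')|}{|\by - \by'|} \; = \; \frac{|\psi(\bx) - \psi(\bx')|}{|\bx - \bx'|} \cdot \frac{|\bx|\cdot|\bx'|}{|\psi(\bx)|\cdot|\psi(\bx')|}.$$
The first factor is at most $A_\psi$ by the Lipschitz condition on $\psi$, while the second factor is at most $A_\psi^2$, since $\psi(\bbo)=\bbo$ together with the inverse Lipschitz estimate force $|\psi(\bx)| \geq |\bx|/A_\psi$. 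Hence $\ipsi$ is Lipschitz with constant at most $A_\psi^3$. The reverse estimate follows immediately by applying the same argument to $\psi^{-1}$, observing that $(\ipsi)^{-1} = \iota(\psi^{-1})$.

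I do not anticipate any real obstacle in this argument: all potential difficulties are absorbed by the algebraic identity \eqref{eq:e-E}. The only item to verify carefully is that the chosen representative covers a full germ neighborhood of $\infty$, but this is automatic since $\iota_{q_1}$ sends small punctured balls around $\bbo$ to complements of large balls in $\R^{q_1}$, i.e.\ to representatives of the germ at $\infty$. In particular, the extension of $\ipsi$ to the point $\infty$ is trivial, since the Lipschitz estimate is uniform over the entire germ representative.
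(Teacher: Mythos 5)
Your argument is correct, and it takes a genuinely different and markedly shorter route than the paper's. The paper proves Lemma \ref{lem:magic-origin} by contradiction: it extracts a pair of sequences realizing an unbounded distortion ratio and rules out five cases in turn (both sequences bounded, one escaping to infinity, incomparable norms, angle bounded away from zero, angle tending to zero), using Claim \ref{claim:lip-1}, the law of cosines \eqref{eq:law-cosine}, and invoking \eqref{eq:e-E} only in the last case. You instead exploit the fact that \eqref{eq:e-E} is an \emph{exact} identity, $|\iota_q(\bx)-\iota_q(\bx')| = |\bx-\bx'|/(|\bx|\,|\bx'|)$, so the distortion ratio of $\ipsi$ factors precisely as the distortion ratio of $\psi$ times the radial correction $|\bx|\,|\bx'|/(|\psi(\bx)|\,|\psi(\bx')|)$; the latter is at most $A_\psi^2$ because the germ condition forces $\psi(\bbo)=\bbo$ and hence $|\psi(\bx)|\geq |\bx|/A_\psi$ --- an estimate the paper itself records just before the lemma. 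This gives the explicit constant $A_\psi^3$, consistent with the bound $|\ipsi(\by)|\leq A_\psi^3\,|\by|$ that the paper derives at the start of its own proof, and the inverse bound follows from $\iota(\psi)^{-1}=\iota(\psi^{-1})$ exactly as you say. Your handling of representatives is also sound: $\iota_{q_1}$ carries $X_1^*\cap \bB_\rho^{q_1}$ onto the points of $Y_1$ of norm at least $1/\rho$, a legitimate representative of the germ at infinity, so the uniform estimate there is all that is needed. What the paper's longer route would buy is robustness in settings where no exact distortion identity is available; here, where \eqref{eq:e-E} holds exactly, your computation is cleaner, yields explicit constants, and adapts verbatim to Lemma \ref{lem:magic-infinity} (using the paper's preliminary estimate $\phi(\by)\sim\by$ near infinity in place of $\psi(\bbo)=\bbo$).
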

The proof will be symmetric to that of Lemma \ref{lem:magic-infinity} in the 
sense that arguments at $\bbo$ are replaced by their exact analogues at 
$\infty$, as expected from such a statement.
\begin{proof}

First, let $g := \psi \circ \iota_{q_1}^{-1}$, that is
$$
g(\by) = \psi\left(\frac{\by}{|\by|^2}\right).
$$
Therefore we get that
$$
\ipsi(\by) = \frac{g(\by)}{|g(\by)|^2},
$$
and since $|g(\by)| \sim |\by|^{-1}$ we find that $\ipsi(\by) \sim \by$, more
precisely
$$
\frac{1}{A_\psi^3} \cdot |\by|\, \leq \, |\ipsi(\by)| \,  \leq \, A_\psi^3 
\cdot |\by|.
$$
As in the previous section, it is enough to show that $\ipsi$ is Lipschitz.

\medskip
Assume that $\ipsi$ is not Lipschitz. Therefore there exist two sequences
$(\by_n)_n$ and $(\by_n')_n$ of $\bU_{q_1}$ such that 
\begin{equation}\label{eq:not-lip-0}
\lim_n \frac{|\ipsi(\by_n) - \ipsi(\by_n')|}{|\by_n - \by_n'|} = \infty.
\end{equation}
We work with a representative of $\psi$ within a compact subset $K_1$ of 
$\R^{q_1}$ containing $\bbo$. Let
$$
\bx_n := \iota_{q_1} (\by_n) \;\; {\rm and} \;\; \bx_n' := \iota_{q_1} (\by_n').
$$
In order to ease computations, we further define the following numbers:
$$
E_n := |\by_n - \by_n'|, \;\; \ipsi_n := |\ipsi(\by_n) - \ipsi(\by_n')|, 
\;\; s_n := |\by_n|, \;\;  s_n' := |\by_n'|, 
\;\; t_n := |\bx_n| \;\;{\rm and} \;\; t_n' := |\bx_n'|.
$$
Of course we find again that $s_n t_n = s_n' t_n' = 1$. 

\smallskip\noindent
$\bullet$ {\bf Case 1.} $\limsup_n \max (|\by_n|,|\by_n'|) \, < \infty$.

In other words there exists a compact subset $C_1$ of $\bU_{q_1}$ which
contains $\by_n,\by_n'$ for all $n$. Since the inversion $\iota_{q_1}$ is 
bi-Lipschitz over $C_1$, so is the mapping $\ipsi$, contradicting the 
estimate \eqref{eq:not-lip-0}.
Therefore this case cannot happen and we can assume, after taking a  
subsequence, that $(\by_n')_n$ converges to $\infty$.

\smallskip\noindent
$\bullet$ {\bf Case 2.} \em $\by_n' \to \infty$ and $\limsup_n|\by_n|<\infty$.
\em

Observe that the following estimates hold true
$$
E_n = s_n' + o(s_n'), \;\; |\phi(\by_n')| \sim s_n' \to \infty, \;\;
{\rm and} \;\; \ipsi(\by_n)| \sim s_n \in [a,b] 
$$ 
for positive real numbers $b>a$. Therefore we deduce that 
$$
\ipsi_n \sim s_n'
$$
contradicting the estimate \eqref{eq:not-lip-0}. This case cannot
happen and therefore a subsequence of $(\by)_n$ converges to $\infty$ as 
well.

\smallskip\noindent
$\bullet$ {\bf Case 3.} \em $\by_n,\by_n' \to \infty$ and there exists $B>1$
such that $|\by_n'|\geq B|\by_n|$ for $n$ large enough. \em

For $n$ large enough, Claim \ref{claim:lip-1} yields 
$$
\frac{E_n}{s_n'} \in \left[\frac{B-1}{B},\frac{B+1}{B}\right].
$$
Since $\ipsi_n \leq (1+B)A_\psi^3 s_n'$ we produce again a contradiction to 
the estimate 
\eqref{eq:not-lip-0}. This case does not occur and we can assume, up to taking  
subsequences that $\frac{s_n}{s_n'} \to 1$ as $n$ goes to $\infty$.

\medskip
Let $2\tht_n\in [0,\pi]$ be the angle between the vectors $\by_n$ and $\by_n'$.

\smallskip\noindent
$\bullet$ {\bf Case 4.} \em $\lim_n\frac{|\by_n|}{|\by_n'|} = 1$ and 
$\liminf_n 2\tht_n \in]0,\pi]$. \em

We can assume that $2\tht_n$ converges to $2\tht \in ]0,\pi]$. We check that
$$
\lim_n\frac{E_n}{s_n} \geq 2 \sin \tht > 0.
$$
Since $\ipsi_n \leq A_\psi^3 (s_n+ s_n')$ for $n$ large enough, estimate 
\eqref{eq:not-lip-0}
cannot be satisfied and thus $\tht = 0$. 

\smallskip\noindent
$\bullet$ {\bf Case 5.} \em $\lim_n\frac{|\by_n|}{|\by_n'|} = 1$ and 
$\lim_n \tht_n =0$. \em

We can assume that $s_n \geq s_n'$ and $\by_n = \by_n' + \bw_n$ so that
$$
\ups_n :=\frac{|\bw_n|}{s_n} = t_n E_n\to 0 \;\; {\rm as} \;\;  n \to \infty, 
$$
Let $a_n := \cos(\tht_n)$, $b_n:= \sin(\tht_n)$ and $\dlt_n s_n' := s_n - s_n'$. 
We get
$$
|\bw_n|^2 = E_n^2 = (2b_n s_n' + \dlt_n s_n' b_n)^2 + (\dlt_n s_n' a_n)^2 
= [\dlt_n^2 + 4b_n^2 + o(b_n^2)] (s_n')^2.
$$
Since we can write $\bx_n' = \bx_n + \bz_n$, Equation \eqref{eq:e-E} 
yields 
$$
%%\by'_n - \by_n = (s_n')^2 \left[\bz_n - \dlt_n(2+\dlt_n)\bx_n\right]
|\bz_n| = t_n t_n' \cdot |\bw_n|
$$
and thus we deduce that
$$
%%\by_n' = \by_n + \bw_n, \;\; {\rm where} \;\; 
\frac{|\bz_n|}{t_n'} = \frac{|\bw_n|}{s_n} = \ups_n 
\to 0 \;\; {\rm as} \;\;  n \to \infty.
$$
%%and by definition of $\bw_n, \bz_n$ we also get the following estimate
%%$$
%%\frac{\chi_n}{\ups_n} \to 1 \;\; {\rm as} \;\;  n \to \infty.
%%$$
Since $\psi$ is bi-Lipschitz we obtain the following estimate
$$
g(\by_n) = g(\by_n') + t_n'\bu_n, \;\; {\rm where} \;\;  |\bu_n| \sim \ups_n,
$$
from which we deduce 
$$
\frac{|g(\by_n')|^2}{|g(\by_n)|^2} \, = \, 1 + r_n \;\; {\rm where} \;\; |r_n| 
\; \lsm \; \ups_n.
$$
Combining the various previous estimates yields the following one
$$
\ipsi_n = \frac{1}{|g(\by_n')|^2}|g(\by_n') - (1+r_n)(g(\by_n') + t_n'\bu_n)| 
\; \lsm \; \frac{|r_n| + (1+r_n)|\bu_n|}{t_n'} \sim s_n \ups_n = E_n, 
$$
%%Since $E_n = s_n\chi_n$, the last estimate about $\psi_n$ gives the next one
%%$$ 
%%\frac{\phi_n}{E_n} \; \lsm \; \frac{|r_n| + 
%%(1+r_n)|\bu_n|}{t_n'}\frac{1}{s_n\chi_n}  \; \lsm \;  1
%%$$
contradicting estimate \eqref{eq:not-lip-0}. 
\end{proof}
%
%
%
%
%
%
%
%
%
%
%
%
%
%
%
%
%
%
%       *****************************************************************
%
%
%
%
%
%
%
%
%
%
%
\section{Inversion and bi-Lipschitz homeomorphisms}\label{section:IabLH}

This section presents the inversion lemma, that is the main tool of this 
note. It is a rather straightforward consequence of Lemma 
\ref{lem:magic-origin} and Lemma \ref{lem:magic-infinity}.

%%\medskip
%%Let $\bw \in \Rq$ be given. 
%%%%Let $\bt_\bw : \Rq \to \Rq$ be the translation 
%%%%$\bx \mapsto \bx -\bw$, which is an isometry of $\Rq$, thus bi-Lipschitz.
%%The inversion of $\Rq$ with centre $\bw$ is the following mapping 
%%$$
%%\iota_{q,\bw} %%:= \iota_q \circ \bt_\bw 
%%:\Rq\setminus \bw \to \bU_q, \;\;
%%\bx \; \mapsto \; \frac{\bx- \bw}{|\bx - \bw|^2}.
%%$$

%
%
\begin{lemma}\label{lem:main}
Let $\Phi:W_1 \to W_2$ be a homeomorphism between $q_i$-affine subsets $W_i$, 
for $i=1,2$. Assume furthermore that, either $W_i$ contains the origin 
$\bbo\in \R^{q_i}$ for $i=1,2$ and $\Phi(\bbo) = \bbo$, or $\bbo\notin 
W_i$ for $i=1,2$. The mapping defined as 
$$
\iPhi : \iota_{q_1}(W_1^*) \to 
\iota_{q_2}(W_2^*), \;\; \bx \to 
\iPhi(\bx) := \iota_{q_2} \circ \Phi \circ \iota_{q_1}^{-1}(\bx).
$$
is bi-Lipschitz if and only if $\Phi$ is. 
Moreover if $W_1$ is unbounded, the mapping $\iPhi$ extends 
bi-Lipschitz-ly at $\bbo$, taking the value $\bbo$.
\end{lemma}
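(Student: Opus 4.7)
The plan is to reduce Lemma \ref{lem:main} to the two germ-ified statements already proved (Lemmas \ref{lem:magic-infinity} and \ref{lem:magic-origin}) together with the elementary fact that $\iota_q$ is a bi-Lipschitz diffeomorphism on compact subsets of $\bU_q$. Since $\iota_q$ is an involution we have $\iota(\iota(\Phi)) = \Phi$, so it suffices to prove only the forward implication: if $\Phi$ is bi-Lipschitz then so is $\iota(\Phi)$. Applying this once to $\Phi$ and once to $\Phi^{-1}$ then yields the full bi-Lipschitz bound, so it is enough to show that $\iota(\Phi)$ is Lipschitz. Under either alternative of the standing hypothesis the bi-Lipschitz condition on $\Phi$ yields $\Phi(\bx)\sim \bx$, and hence $\iota(\Phi)(\bx)\sim \bx$ on all of $\iota_{q_1}(W_1^*)$, by the same computation used at the start of each of the two germ lemmas.

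Arguing by contradiction, suppose $\iota(\Phi)$ fails to be Lipschitz and pick witnessing sequences $(\bx_n), (\bx_n')$ in $\iota_{q_1}(W_1^*)$. After passing to subsequences, we may assume $\bx_n\to\chi$ and $\bx_n'\to\chi'$ in the one-point compactification $\overline{\R^{q_1}}$, with each of $\chi,\chi'$ lying in $\bU_{q_1}$, equal to $\bbo$, or equal to $\infty$. I would then run a case analysis on the pair $(\chi,\chi')$: (i) Both $\chi,\chi'\in\bU_{q_1}$: the sequences eventually lie in a compact subset of $\bU_{q_1}$ on which $\iota_{q_1}$ is bi-Lipschitz (and similarly $\iota_{q_2}$ near the corresponding image), so $\iota(\Phi) = \iota_{q_2}\circ\Phi\circ\iota_{q_1}^{-1}$ is bi-Lipschitz there, contradicting the blow-up of the ratio. (ii) $\chi=\chi'=\bbo$: the preimages $\iota_{q_1}^{-1}(\bx_n),\iota_{q_1}^{-1}(\bx_n')$ escape to $\infty$ in $W_1$, which forces $W_1$ unbounded, and Lemma \ref{lem:magic-infinity} applied to the germ of $\Phi$ at $\infty$ delivers the contradiction. (iii) $\chi=\chi'=\infty$: the preimages converge to $\bbo$, which under the hypothesis forces $\bbo\in W_i$ with $\Phi(\bbo)=\bbo$, and Lemma \ref{lem:magic-origin} applied to the germ of $\Phi$ at $\bbo$ closes the case. (iv) All remaining mixed configurations: the estimate $\iota(\Phi)(\bx)\sim \bx$ makes $|\iota(\Phi)(\bx_n) - \iota(\Phi)(\bx_n')|$ of the same order as $|\bx_n - \bx_n'|$, so the ratio is bounded.

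The \emph{moreover} statement is then immediate: when $W_1$ is unbounded, sequences in $\iota_{q_1}(W_1^*)$ approaching $\bbo$ correspond under $\iota_{q_1}^{-1}$ to sequences in $W_1$ escaping to $\infty$, and Lemma \ref{lem:magic-infinity} produces precisely the bi-Lipschitz extension of $\iota(\Phi)$ at $\bbo$ with value $\bbo$ (applying the same argument to $\Phi^{-1}$ supplies the lower bound). The principal obstacle I anticipate is bookkeeping: matching each pair of limits $(\chi,\chi')$ to the correct tool (local bi-Lipschitz-ness of $\iota_q$, one of the two germ lemmas, or the elementary estimate $\iota(\Phi)(\bx)\sim\bx$), and verifying that each hypothesis alternative activates the needed lemma exactly when the geometry of the configuration demands it. No genuinely new analytic input beyond the two preceding germ lemmas should be required.
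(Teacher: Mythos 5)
Your proof is correct and follows essentially the same route as the paper: reduce via the involution to showing $\iota(\Phi)$ is Lipschitz, then argue by contradiction on witnessing sequences according to whether they accumulate at $\bbo$, at $\infty$, or in a compact subset of $\bU_{q_1}$, invoking Lemma \ref{lem:magic-infinity}, Lemma \ref{lem:magic-origin}, and the local bi-Lipschitzness of $\iota_q$ on compacta respectively, with the elementary estimate $\iota(\Phi)(\bx)\sim\bx$ disposing of the mixed configurations. The paper packages the same content as a decomposition into the three regions $C_r^*$, $C_r^R$, $C^R$ followed by a pairwise gluing argument on sequences, so the difference is purely organizational.
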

\begin{proof} Since $\iota(\iPhi) = \Phi$, it is enough to show the result 
when $\Phi$ is bi-Lipschitz. Since $\iPhi^{-1} = \iota_{q_1} \circ 
\Phi^{-1} \circ \iota_{q_2}^{-1}$, we only need to show that $\iPhi$ is 
Lipschitz.

\smallskip
By construction we already know that $\iPhi$ is a homeomorphism of 
$\iota_{q_1}(W_1^*)\to 
\iota_{q_2}(W_2^*)$ which extends homeomorphically to 
$\overline{\iota_{q_1}(W_1^*)}\to 
\overline{\iota_{q_2}(W_2^*)}$, mapping $\bbo$ to $\bbo$ whence $W_1$ is
unbounded.

Let $0<r<R<\infty$ be radii. We define the following subsets: 
\begin{eqnarray*}
C_r^* & := & \{\by \in \iota_{q_1}(W_1^*) : 0 < |\by| \leq r\} \\
C_r^R & := & \{\by \in \iota_{q_1}(W_1^*) : r\leq |\by| \leq R\} \\
C^R & := & \{\by \in \iota_{q_1}(W_1^*) : R\leq |\by|\} 
\end{eqnarray*}
The "annulus" $C_r^R$ is compact and does not contain the  origin. We
recall that the mapping $\iota_q$ induces a bi-Lipschitz homeomorphism 
$K \to \iota_q(K)$ over any compact subset $K$ of $\bU_q$. Thus $\iPhi$ 
induces a bi-Lipschitz homeomorphism from $C_r^R$ onto its image. 
By Lemma \ref{lem:magic-origin}, $\iPhi$ is a bi-Lipschitz homeomorphism
from $C^R$ onto its image. If $W_1$ is compact, we can take $r$ small enough
so that $C_r^*$ is empty. 
If $W_1$ is unbounded, Lemma \ref{lem:magic-infinity} implies that 
$\iPhi$ is a bi-Lipschitz homeomorphism from $C_r^*$ onto its image, and 
extends bi-Lipschitz-ly at $\bbo$.

Let $A,B \in \{C^R,C_r^R,C_r^*\}$ with $A\neq B$.
If $\iPhi$ is not Lipschitz in $A\cup B$, then there exist a pair of sequences
$(\ba_n)_n$ of $A$ and $(\bb_n)_n$ of $B$ such that 
$$
\lim_{n\to\infty}\frac{|\iPhi(\ba_n) - \iPhi(\bb_n)|}{|\ba_n - \bb_n|}
=\infty.
$$
Up to taking some sub-sequences we can further assume that each sequence 
$(\ba_n)_n$ and $(\bb_n)_n$ either converges or goes to infinity. 
We check that the only scenario where such a pair of sequences could exist 
satisfying the required limit condition above, is when both converge to
a point $\bc \in A \cap B$, thus $\bc \neq \bbo$. 
Which is absurd since nearby the point $\bc$ the 
inversion $\iota_{q_1}^{-1}$ is Lipschitz as is $\iota_{q_2}$ near the point 
$\Phi(\iota_{q_1}^{-1}(\bc))$.
\\
Thus $\iPhi$ is Lipschitz over each $A\cup B$ with $A,B \in 
\{C^R,C_r^R,C_r^*\}$ with $A\neq B$. Let $C$ be the remaining subset
among $\{C^R,C_r^R,C_r^*\}$ so that $A\cup B\cup C = \iota_{q_1}(W_1)$.
Working with $A' = A\cup B$ and $B' = C$ instead of $A,B$ as we did in the 
previous case, we conclude that $\iPhi$ is Lipschitz
\end{proof}
%
%
%
%
%
%
%
%
%
%
%
%
%
%
%
%
%
%
%
%
%       *****************************************************************
%
%
%
%
%
%
%
%
%
%
%
%
%
%
%
%
%
%
\section{Main result}\label{section:MR}
$ $ 
Let $N_q = (0,\ldots,0,1) \in \R^{q+1}$ be the north pole of the unit sphere
$\bS^q$. Let
$$
\sgm_q :\Rq\to \bS^q\setminus N_q, \;\;
\bx \; \mapsto \; \left(\frac{2\bx}{1+|\bx|^2},\frac{|\bx|^2-1}{|\bx|^2+1}
\right)
$$ 
be the inverse of the stereographic projection with centre $N_q$. Given a 
subset of $S$ of $\Rq$, let 
$$
\wtS := \clos(\sgm_q(S)).
$$
%%In particular Thus $S$ is compact if and only if $\wtS = \sgm_q(S)$.

\medskip
If $\Phi:W_1 \to W_2$ is a homeomorphism, where each subset $W_i$ is 
$q_i$-affine, $i=1,2$, its stereographic pre-compactification is the homeomorphism 
$\sgm_{q_2}\circ\Phi\circ\sgm_{q_1}^{-1}$. If $W_1$ is unbounded, the stereographic 
compactification of $\Phi$ is the mapping $\wtPhi$, extension of 
$\sgm_{q_2}\circ\Phi\circ\sgm_{q_1}^{-1}$ to $\wtW_1$.

\begin{theorem}\label{thm:main}
Let $W_i$ be $q_i$-affine subsets, $i=1,2$. A mapping 
$\Phi:W_1\to W_2$ is bi-Lipschitz, if and only if its stereographic compactification 
$\wtPhi: \wtW_1 \to \wtW_2$ is bi-Lipschitz
\end{theorem}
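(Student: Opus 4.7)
The plan is to reduce Theorem~\ref{thm:main} to the inversion lemma (Lemma~\ref{lem:main}) by means of a change of chart on the sphere. First I would introduce the inverse stereographic projection from the \emph{south} pole,
$$
\tau_q : \Rq \to \bS^q\setminus\{-N_q\}, \;\; \bx \; \mapsto \; \left(\frac{2\bx}{1+|\bx|^2},\;\frac{1-|\bx|^2}{1+|\bx|^2}\right),
$$
which is a $C^\infty$ diffeomorphism sending $\bbo$ to $N_q$, hence bi-Lipschitz on every bounded subset of $\Rq$. A direct computation on $\bS^q \setminus \{N_q,-N_q\}$ yields the key identity
$$
\sgm_q \; = \; \tau_q \circ \iota_q \;\;\; \text{on} \;\;\; \Rq^*,
$$
so that away from the north poles,
$$
\wtPhi \; = \; \sgm_{q_2}\circ \Phi \circ \sgm_{q_1}^{-1} \; = \; \tau_{q_2} \circ \iPhi \circ \tau_{q_1}^{-1}.
$$
Reading $\wtPhi$ through the south-pole charts thus recovers exactly the inversion $\iPhi$ of $\Phi$, and the extension $\iPhi(\bbo)=\bbo$ produced by Lemma~\ref{lem:main} whenever $W_1$ is unbounded matches, via $\tau_{q_i}(\bbo) = N_{q_i}$, the convention $\wtPhi(N_{q_1}) = N_{q_2}$.

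Next I would split $\wtW_1$ into two overlapping regions covered by the two charts $\sgm_{q_1}$ (away from $N_{q_1}$) and $\tau_{q_1}$ (on a neighbourhood of $N_{q_1}$), and analogously for $\wtW_2$. On the compact portion $\bB_R^{q_1} \cap W_1$, the map $\sgm_{q_1}$ is a smooth diffeomorphism onto its image, hence bi-Lipschitz with constants depending on $R$; the bi-Lipschitz character of $\Phi$ there is therefore equivalent to that of $\wtPhi$ on its image. On a neighbourhood of $N_{q_1}$ in $\wtW_1$, the chart $\tau_{q_1}$ (and likewise $\tau_{q_2}$) is bi-Lipschitz near $\bbo$, so the bi-Lipschitz character of $\wtPhi$ near $N_{q_1}$ is equivalent to that of $\iPhi$ near $\bbo$, which in turn is equivalent to that of $\Phi$ at infinity on $W_1$ by Lemma~\ref{lem:main}.

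Finally, these two local bi-Lipschitz equivalences must be glued into the global statement. This is carried out by a sequence/compactness argument identical to the one closing the proof of Lemma~\ref{lem:main}: decompose $\wtW_1$ as the union of a small spherical cap around $N_{q_1}$, a compact spherical annulus, and the closed complementary region, on each of which $\wtPhi$ has already been shown to be bi-Lipschitz; a putative pair of sequences $(P_n),(Q_n)\subset \wtW_1$ violating the global Lipschitz estimate would, after extraction, be forced to accumulate at a common point lying outside the transition zone, where one of the local estimates already yields a contradiction. The main obstacle is really only the bookkeeping in this gluing step; the core of the argument—the identity $\sgm_q = \tau_q \circ \iota_q$, which turns the stereographic compactification into the inversion picture—is almost immediate, and is precisely why Lemma~\ref{lem:main} is the right tool.
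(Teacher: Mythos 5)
Your proposal is correct and takes essentially the same route as the paper: your identity $\sgm_q = \tau_q \circ \iota_q$ on $\Rq\setminus\bbo$ is precisely the relation $\beta_q\circ\iota_q = \sgm_q$ underlying the paper's Lemma~\ref{lem:inversion-north} (your formula for the south-pole chart is the correct one; the paper's displayed $\beta_q$ contains a typo), and the subsequent decomposition into a bounded part, a cap around the north pole, and a sequence-based gluing is the paper's argument. One small slip in wording: a pair of sequences violating the global Lipschitz bound accumulates, after extraction, at a common point lying \emph{in} the overlap of the two closed pieces (hence away from $N_{q_1}$), not ``outside the transition zone'', but the local contradiction you invoke there is exactly the right one and is unaffected.
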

The rest of this section is devoted to the proof of this result. 
%%Without loss of generality we will assume that each $W_i$ contains 
%%the origin $\bbo$ of $\R^{q_i}$ and that $\Phi(\bbo) = \bbo$.

\medskip
We recall that the quotient space obtained from gluing two copies 
of $\Rq$, when both $\bU_q$ are identified by the 
inversion $\iota_q$ is $\bS^q$. 
Therefore the next result, somehow tuned with
\cite[Lemma 7.2]{CoGrMi2}, should not come as a surprise.
\begin{lemma}\label{lem:inversion-north}
Suppose that $W_1$ is unbounded. 
The mapping germ $\Phi: (W_1,\infty) \to (W_2,\infty)$ is bi-Lipschitz if and 
only if $\wtPhi :(\wtW_1,N_{q_1}) \to (\wtW_2,N_{q_2})$ is bi-Lipschitz.
\end{lemma}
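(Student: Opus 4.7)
The plan is to translate the germ problem at $N_{q_i}$ into the germ problem at $\bbo$ already handled in Sections \ref{section:IaGobLHaI} and \ref{section:IaGobLHa0}. For each $q$, I would introduce $\tau_q : \Rq \to \bS^q \setminus \{-N_q\}$, the inverse of the stereographic projection centred at the south pole $-N_q$; this is a smooth diffeomorphism onto its image sending $\bbo$ to $N_q$. A short direct computation from the explicit formula for $\sgm_q$ yields the key transition identity $\tau_q^{-1} \circ \sgm_q = \iota_q$ on $\Rq \setminus \bbo$. Consequently, in the local charts $\tau_{q_1}, \tau_{q_2}$ around $N_{q_1}, N_{q_2}$, the germ $\wtPhi$ reads
$$
\tau_{q_2}^{-1} \circ \wtPhi \circ \tau_{q_1} \; = \; \iota_{q_2}\circ \Phi \circ \iota_{q_1}^{-1} \; = \; \iPhi,
$$
viewed as a germ $(X_1,\bbo) \to (X_2,\bbo)$, where $X_i := \overline{\iota_{q_i}(W_i^*)}$; note that $\bbo \in X_1$ precisely because $W_1$ is unbounded.

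Next, I would observe that $\tau_q$ is a smooth diffeomorphism with non-degenerate derivative at $\bbo$, hence bi-Lipschitz on any sufficiently small compact neighbourhood of $\bbo$. This upgrades the chart identity above into a metric statement: the outer metric of $\wtW_i \subset \R^{q_i+1}$ near $N_{q_i}$ is bi-Lipschitz equivalent, via $\tau_{q_i}$, to the outer metric of $X_i \subset \R^{q_i}$ near $\bbo$. Therefore $\wtPhi$ is bi-Lipschitz as a germ at $N_{q_1}$ if and only if $\iPhi$ is bi-Lipschitz as a germ at $\bbo$.

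To conclude, I would invoke Lemma \ref{lem:magic-infinity} in the forward direction (bi-Lipschitz-ness of $\Phi$ at $\infty$ implies bi-Lipschitz-ness of $\iPhi$ at $\bbo$), and Lemma \ref{lem:magic-origin} applied to $\iPhi$ for the reverse direction (using that $\iota(\iPhi) = \Phi$). I expect the only real obstacle to be the careful comparison between the ambient outer metric on $\wtW_i \subset \R^{q_i+1}$ near $N_{q_i}$ and the Euclidean metric on $X_i \subset \R^{q_i}$ near $\bbo$; this is dispatched once and for all by the local bi-Lipschitz property of the smooth chart $\tau_{q_i}$, after which everything reduces to a direct appeal to the two germ-level inversion lemmas.
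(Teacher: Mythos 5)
Your proposal is correct and follows essentially the same route as the paper: the paper's map $\beta_q$ is (up to what appear to be typos in its printed formula) exactly your south-pole chart $\tau_q$ restricted to a neighbourhood of $\bbo$, and the paper likewise verifies $\beta_q\circ\iota_q=\sgm_q$ outside a ball before concluding via Lemmas \ref{lem:magic-infinity} and \ref{lem:magic-origin}. Your explicit derivation of the transition identity $\tau_q^{-1}\circ\sgm_q=\iota_q$ and the remark that $\bbo\in X_1$ because $W_1$ is unbounded are welcome clarifications but not a different argument.
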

\begin{proof}
Let $\bz = (\bz',t)$ be Euclidean coordinates on $\R^{q+1} = \Rq\times\R$.
The following mapping  
$$
\beta_q : \bB_\frac{1}{2}^q \to \bS^q \cap \left\{ t \geq \frac{3}{5}
\right\}, \;\; \by \to \left( \frac{1}{1+|\by|^2} \cdot \by, 
\frac{1-|\by|}{1 + |\by|^2} \right)
$$
is a $C^\infty$ diffeomorphism, thus is a bi-Lipschitz homeomorphism
mapping $\bbo$ onto $N_q$.
%%and
%%$$
%%\beta_q : \bB_\frac{1}{2}^q \to \bS^q
%%\cap\{h \geq \frac{3}{5}\}.
%%$$ 
We also check that
$$
|\bx| \geq 2 \; \Longrightarrow \; \beta_q\circ\iota_q(\bx) = \sgm_q(\bx).
$$
The Lemma follows from Lemma \ref{lem:magic-origin}, Lemma 
\ref{lem:magic-infinity} and the fact that $\beta_q$ is bi-Lipschitz.
\end{proof}
\begin{proof}[Proof of Theorem \ref{thm:main}]
We recall that $\sgm_q$ is bi-Lipschitz over any given compact subset 
of $\Rq$. If $W_1$ is compact the result is thus obvious.

\smallskip
Assume that $W_1$ is unbounded. 
Let $K_1 = \bB_{R_1}^{q_1}\cap W_1$ and 
$K_2 = \Phi(K_1)$ with $R_1\geq 2$ chosen so that $W_i \setminus K_i$
is contained in $\R^{q_i}\setminus \bB_2^{q_i}$, where $i=1,2$. 
Thus the mapping 
$$
\Phi_b:=\Phi|_{K_1} : K_1 \to K_2
$$
is bi-Lipschitz, if and only if $\wtPhi_b := \wtPhi|_{\wtK_1} : \wtK_1 
\to\wtK_2$ is bi-Lipschitz. 

Up to increasing $R_1$, following the proof of Lemma 
\ref{lem:inversion-north}, we deduce that the mapping 
$$
\Phi_u:= \Phi|_{W_1\setminus K_1} :W_1\setminus K_1 \to W_2\setminus K_2
$$
is bi-Lipschitz if and only if the mapping 
$\wtPhi_u := \wtPhi|_{\wtW_1\setminus \wtK_1} : \wtW_1\setminus \wtK_1 \to
\wtW_2\setminus \wtK_2$ is. Observe that $\Phi_u$, $\wtPhi_u$ respectively, extends
bi-Lipschitz-ly on the closure of its domains when $\Phi$, 
$\wtPhi$ respectively, is bi-Lipschitz. 

\smallskip\noindent
$\bullet$ Assume that $\Phi$ is bi-Lipschitz. Thus $\wtPhi$ is a 
homeomorphism and both $\wtPhi_b$ and $\wtPhi_u$ are bi-Lipschitz. 

If $\wtPhi$ were not Lipschitz, there would exist two sequences 
of $(\bz_n)_n$, $(\bz_n')_n$ of $\wtW_1$ such that 
$$
\lim_{n\to\infty}\frac{|\wtPhi(\bz_n') - \wtPhi(\bz_n)|}{|\bz_n' - \bz_n|} 
= \infty. 
$$
Since $\wtW_1$ is compact, up to passing to subsequences we can assume
that both sequences converge to $\omg_1$. Since $\Phi_u,\Phi_b$ 
are bi-Lipschitz and $\Phi_u$ extends bi-Lipschitz-ly onto 
$\clos(\wtW_1\setminus \wtK_1)$, necessarily one of the sequences is 
contained in $\wtK_1$ and the other one in $\wtW_1\setminus \wtK_1$. Thus 
$\omg_1 \in \wtK_1$ and $\omg_2 := \wtPhi(\omg_1) \in \wtK_2$. Since 
$\sgm_{q_1}^{-1}$ is bi-Lipschitz nearby $\omg_1$ and  $\sgm_{q_2}^{-1}$ is 
bi-Lipschitz nearby $\omg_2$, the 
mapping $\wtPhi$ is Lipschitz nearby $\omg_1$, yielding a contradiction.

\smallskip\noindent
$\bullet$ Assume that $\wtPhi$ is bi-Lipschitz. Thus $\Phi$ is a 
homeomorphism and both $\Phi_b$ and $\Phi_u$ are bi-Lipschitz. 
Moreover $\Phi_u$ extends bi-Lipschitz-ly to $\clos(W_1\setminus K_1)$.

If $\Phi$ were not Lipschitz, there would exist two sequences 
of $(\bx_n)_n$, $(\bx_n')_n$ of $\wtW_1$ such that 
$$
\lim_{n\to\infty}\frac{|\Phi(\bx_n') - \wtPhi(\bx_n)|}{|\bx_n' - \bx_n|} 
= \infty. 
$$
Necessarily one sequence belongs to $K_1$ and the other one to 
$\clos(W_1\setminus K_1)$. Assume that $(\bx_n)_n$ is contained in $K_1$.
So we can assume it converge to $\by_1$. If $\by_1$ does not lie
in the compact set $L_1 = K_1\cap \clos(W_1\setminus K_1)$, thus  
$$
\liminf_n |\bx_n' - \bx_n|  \in (0,\infty]
$$
therefore $\Phi(\bx_n')$ goes to $\infty$, and using $\wtPhi$, we conclude
that $\bx_n' \to \infty$. 
Let $M$ be a Lipschitz constant common to $\Phi_b$ and $\Phi_u$.
Let $\by_1'$ be a point of $L_1$.
Thus
$$
|\Phi(\bx_n') - \Phi(\bx_n)|\leq |\Phi_u(\bx_n) - \Phi_u(\by_1')| + 
|\Phi_b(\by_1')-\Phi_b(\bx_n)| \leq M|\bx_n - \by_1'| + M |\by_1'-\bx_n|
$$
yielding a contradiction since $|\bx_n-\by_1'|\to \infty$. 
Thus $\by_1$ lies in $L_1$.

The same argument involving the point $\by_1' = \by_1$ implies that 
$\liminf_n |\bx_n' - \bx_n|  = 0$, so we can assume that $(\bx_n')_n$
converge to $\by_1$ as well. Since $\sgm_{q_1}^{-1}$
is bi-Lipschitz nearby $\by_1$ and $\sgm_{q_2}$ is bi-Lipschitz nearby 
$\Phi(\by_1)$, the mapping $\Phi$ is Lipschitz nearby $\by_1$, yielding a 
contradiction. 
\end{proof}
%
%
%
%
%
%
%
%
%
%
%
%
%
%
%
%
%
%
%
%
%
%
%       *****************************************************************
%
%
%
%
%
%
%
%
%
%
%
%
%
%
%
%
%
%
\section{Geometry at infinity of tame sets}\label{section:GaITS}

There are many possible applications of the inversion Lemma \ref{lem:main}. 
In particular, any 
bi-Lipschitz classification problem of subsets at infinity is equivalent to
a bi-Lipschitz classification problem at the origin.

\medskip
There are quite a few questions of bi-Lipschitz definable geometry
at infinity which now reduce to a problem at the origin by our main result. 
Many of them  
would require some specific preparations, that is why we present here only two
such applications, which are immediate consequences of 
Lemma~\ref{lem:main}. 

\medskip
\subsection{Bi-Lipschitz definable sets at infinity and their tangent cones}
$ $

\smallskip
A non-negative cone $C$ of $\Rq$ is any subset of $\Rq$ stable by 
non-negative rescaling:
$$
\bx \in C \; \Longrightarrow \; t\cdot \bx \in C, \; \forall t\geq 0.
$$
For a given non-negative cone $C$ of $\Rq$, the \em link of $C$ \em is defined as
$$
\bS(C) := C\cap \bS^{q-1}.
$$
Let $S$ be a non-empty subset of $\Rq$. The non-negative cone over $S$ with vertex the 
origin $\bbo$ is the subset of $\Rq$ defined as
$$
\wt{S}^+:= \{t\bu \in \Rq : \bu \in S, \; t\geq 0\}.
$$
In particular, a subset $C$ is a non-negative cone of $\Rq$ if and only if it is
the non-negative cone over its link: 
$$
C = \wt{\bS(C)}^+. 
$$
\begin{definition}\label{def:asymptotic-set}
Let $S$ be a subset of $\Rq$. 

(i) The \em asymptotic
set of $S$ at $\bbo$ is the closed subset of the unit sphere $\bS^{q-1}$
$$
S^\bbo := \left\{\bu \in \bS^{q-1} \;  : \; \exists \; (\bx_k)_k \in S^* \;\;
{\rm such \: that} \;\; \bx_k\to \bbo \;\;{\rm and} \;\; 
\frac{\bx_k}{|\bx_k|} \to \bu
\right\}.
$$

(ii) The \em asymptotic
set of $S$ at $\infty$ is the closed subset of the unit sphere $\bS^{q-1}$
$$
S^\infty := \left\{\bu \in \bS^{q-1} \;  : \; \exists \; (\bx_k)_k \in S \;\;
{\rm such \: that} \;\; |\bx_k| \to \infty \;\;{\rm and} \;\; 
\frac{\bx_k}{|\bx_k|} \to \bu
\right\}.
$$
\end{definition}
The subset $S^\omg$ is not empty if and only if $\overline{S}$ contains 
$\omg$, and observe that 
$$
\clos(S)^\omg = S^\omg
$$
where $\omg =\bbo$ or $\infty$. Since we are interested in the non-negative cones
$\wt{S^\bbo}^+$ and $\wt{S^\infty}^+$, we will work only with closed subsets. 
The non-negative cone $\wt{S^\omg}^+$ is also known as the \em tangent cone of $S$
at $\omg$, \em for $\omg = \bbo$ or $\infty$.  

\bigskip
Given $\bx \in \bU_q$, observe the following obvious fact 
$$
\frac{\bx}{|\bx|} = \frac{\iota_q(\bx)}{|\iota_q(\bx)|}.
$$
Let $X$ be a closed subset of $\Rq$ and let $\iota(X)$ be the closure 
$\clos(\iota_q(X^*))$. The following result is obvious from the definitions of 
asymptotic sets and the inversion.
 \begin{lemma}\label{lem:inv-tang-0-infty}  
The following identities hold true:
$$
X^\infty = \iota(X)^\bbo \;\; {\rm and} \;\; X^\bbo = \iota(X)^\infty. 
$$
\end{lemma}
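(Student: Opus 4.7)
The plan is to reduce both identities to the elementary fact, already singled out in the excerpt, that the inversion $\iota_q$ preserves radial directions while reciprocating distances from the origin:
\[
\frac{\bx}{|\bx|} \;=\; \frac{\iota_q(\bx)}{|\iota_q(\bx)|}, \qquad |\iota_q(\bx)| = \frac{1}{|\bx|}, \qquad \bx\in\bU_q.
\]
Together these identities say that $\iota_q$ carries a sequence escaping to $\infty$ with asymptotic direction $\bu$ to a sequence approaching $\bbo$ with the \emph{same} asymptotic direction $\bu$, and conversely (using that $\iota_q$ is an involution on $\bU_q$).

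For the inclusion $X^\infty\subseteq \iota(X)^\bbo$, I would take $\bu\in X^\infty$ witnessed by a sequence $(\bx_k)_k\in X$ with $|\bx_k|\to\infty$ and $\bx_k/|\bx_k|\to\bu$; for $k\gg 1$ one has $\bx_k\in X^*$, so $\by_k:=\iota_q(\bx_k)\in\iota_q(X^*)\subseteq \iota(X)^*$. The two identities above then give $|\by_k|\to 0$ and $\by_k/|\by_k|\to\bu$, so $\bu\in\iota(X)^\bbo$.

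The reverse inclusion $\iota(X)^\bbo\subseteq X^\infty$ is the only step presenting a (mild) obstacle: a witnessing sequence $(\by_k)_k\in \iota(X)^*$ lies a priori only in the closure $\clos(\iota_q(X^*))$, not in $\iota_q(X^*)$ itself. I would handle this by a standard approximation: for each $k$ pick $\by_k'\in\iota_q(X^*)$ with $|\by_k'-\by_k|\le |\by_k|/k$, which preserves both $\by_k'\to\bbo$ and $\by_k'/|\by_k'|\to\bu$. Setting $\bx_k':=\iota_q(\by_k')\in X^*$ (using that $\iota_q$ is involutive), the two identities yield $|\bx_k'|\to\infty$ and $\bx_k'/|\bx_k'|\to\bu$, hence $\bu\in X^\infty$.

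The second identity $X^\bbo=\iota(X)^\infty$ is proved by the exactly symmetric argument with the roles of $\bbo$ and $\infty$ interchanged; no new idea is needed, and it can equivalently be deduced by applying the first identity to $\iota(X)$ in place of $X$.
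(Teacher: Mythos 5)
Your proof is correct and follows essentially the same route as the paper, which simply declares the lemma ``obvious'' from the identity $\frac{\bx}{|\bx|}=\frac{\iota_q(\bx)}{|\iota_q(\bx)|}$ together with $|\iota_q(\bx)|=1/|\bx|$; you merely write out the details the paper leaves implicit. Your approximation step for points of $\clos(\iota_{q}(X^*))$ is sound, though the paper sidesteps it even more directly via its earlier remark that $\clos(S)^{\omg}=S^{\omg}$.
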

From this lemma we deduce 
\begin{equation}\label{eq:cones-0-infty}
\wt{X^\infty}^+ = \wt{\iota(X)^\bbo}^+ \;\; {\rm and} \;\; 
\wt{X^\bbo}^+ = \wt{\iota(X)^\infty}^+. 
\end{equation}

\bigskip
Let $\cM$ be a polynomially  bounded o-minimal structure expanding the real
field $(\R,+,.,\geq)$ (see \cite{vdD}). 
A subset of an Euclidean space $\Rq$ is \em definable \em
if it is definable in $\cM$. Let $S$ be a subset of $\Rq$. A mapping $S\to\Rp$
is \em definable \em if its graph is definable.

\medskip
We recall the following result of Sampaio about tangent cones:
\begin{theorem}[\cite{Sam}]
Let $(X_i,\bbo)$ be the germ of a definable set of $\R^{q_i}$ at the origin,
i=1,2. If there 
exists a bi-Lipschitz homeomorphism $(X_1,\bbo) \to (X_2,\bbo)$, then there
exists a bi-Lipschitz homeomorphism $\wt{X_1^\bbo}^+ \to \wt{X_2^\bbo}^+$ 
mapping $\bbo$ onto $\bbo$. 
\end{theorem}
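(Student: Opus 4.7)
The plan is to construct the bi-Lipschitz map between the tangent cones as a limit of rescalings of $\Phi$, leaning on definability at the one place where raw analysis is insufficient.

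First I would fix a representative of $\Phi$ on $X_1\cap \bB_{r_0}^{q_1}$ with bi-Lipschitz constant $A\geq 1$ and $\Phi(\bbo)=\bbo$, which in particular forces $A^{-1}|\bx|\leq |\Phi(\bx)|\leq A|\bx|$. For each $t\in(0,r_0]$ I would define the rescaled mapping $\Phi_t(\bx):=t^{-1}\Phi(t\bx)$ on the domain $t^{-1}(X_1\cap \bB_{r_0}^{q_1})$. All the $\Phi_t$ are $A$-bi-Lipschitz with the \emph{same} constant, fix $\bbo$, and satisfy the same two-sided norm comparison. The standard blow-up description of the tangent cone in the definable setting asserts that, as $t\to 0^+$, the rescaled domains $t^{-1}(X_1\cap \bB_{r_0}^{q_1})$ converge in the Kuratowski sense (Hausdorff on every bounded subset) to $\wt{X_1^\bbo}^+$, and analogously for $X_2$.

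Next, exhausting $\wt{X_1^\bbo}^+$ by compact slices $\wt{X_1^\bbo}^+\cap \bB_R^{q_1}$ and applying Arzel\`a--Ascoli to the uniformly Lipschitz family $(\Phi_t)$, a diagonal extraction in $R$ produces, along some sequence $t_n\to 0^+$, a uniform limit $\Phi_0:\wt{X_1^\bbo}^+\to \wt{X_2^\bbo}^+$ that is $A$-Lipschitz, fixes $\bbo$, and satisfies $|\Phi_0(\bx)|\geq A^{-1}|\bx|$. Running the same construction on $\Phi^{-1}$, along a further subsequence, yields a candidate inverse $\Psi_0:\wt{X_2^\bbo}^+\to \wt{X_1^\bbo}^+$ with the symmetric properties.

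The main obstacle is promoting these sub-sequential limits to genuine mutual inverses of the full target cones, and not merely $A$-Lipschitz embeddings; the inclusion $\Phi_0(\wt{X_1^\bbo}^+)\subseteq \wt{X_2^\bbo}^+$ is easy, but surjectivity and the identification $\Psi_0=\Phi_0^{-1}$ are not automatic, because a priori the extracted subsequence might depend on the test point. This is exactly where the definability hypothesis becomes essential: since $\Phi$ is definable, $\{\Phi_t\}_{t>0}$ is a definable family of uniformly bi-Lipschitz mappings, so by the o-minimal Curve Selection Lemma applied in the parameter $t$, every direction $\bv\in \wt{X_2^\bbo}^+$ is realized along a definable arc in $X_1$, which forces the pointwise limit of $\Phi_t$ to be uniquely determined, independent of the chosen subsequence. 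Combining this rigidity with the Kuratowski convergence of the graphs pins down $\Psi_0\circ\Phi_0=\mathrm{Id}_{\wt{X_1^\bbo}^+}$ and $\Phi_0\circ\Psi_0=\mathrm{Id}_{\wt{X_2^\bbo}^+}$, so $\Phi_0$ is the desired $A$-bi-Lipschitz homeomorphism of tangent cones sending $\bbo$ to $\bbo$.
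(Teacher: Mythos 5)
You should first be aware that the paper does not prove this statement at all: it is quoted verbatim from Sampaio's article \cite{Sam}, with only the remark that the subanalytic arguments there carry over to the definable setting. So the relevant comparison is with Sampaio's own proof, whose skeleton (rescalings $\Phi_t(\bx)=t^{-1}\Phi(t\bx)$, uniform bi-Lipschitz constants, Kuratowski convergence of $t^{-1}X_i$ to $\wt{X_i^\bbo}^+$, Arzel\`a--Ascoli plus diagonal extraction) you have reproduced correctly.

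The gap is in the step you yourself flag as the main obstacle. The theorem does \emph{not} assume that the bi-Lipschitz homeomorphism $\Phi$ is definable; only the sets $X_i$ are definable. (Compare with Valette's theorem quoted later in the same section, where the homeomorphism is explicitly assumed definable --- the absence of that word here is deliberate, and is the whole point and difficulty of Sampaio's result.) Consequently the assertion ``since $\Phi$ is definable, $\{\Phi_t\}_{t>0}$ is a definable family'' is not available, and the curve-selection argument you build on it collapses. The fix is elementary and requires no definability of $\Phi$: the identity $(\Phi^{-1})_t\circ\Phi_t=\mathrm{id}$ holds \emph{exactly} for every $t>0$, since $(\Phi^{-1})_t(\Phi_t(\bx))=t^{-1}\Phi^{-1}(\Phi(t\bx))=\bx$, and likewise $\Phi_t\circ(\Phi^{-1})_t=\mathrm{id}$. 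Extract a single sequence $t_n\to 0^+$ along which \emph{both} families $(\Phi_{t_n})$ and $((\Phi^{-1})_{t_n})$ converge uniformly on compacta (one diagonal extraction over both); both composition identities then pass to the limit, yielding $\Psi_0=\Phi_0^{-1}$ and surjectivity onto $\wt{X_2^\bbo}^+$ at once. Note also that the lower Lipschitz bound $|\Phi_{t}(\bx)-\Phi_{t}(\bx')|\geq A^{-1}|\bx-\bx'|$ survives the limit, so $\Phi_0$ is automatically an $A$-bi-Lipschitz embedding; only surjectivity was ever in question. Definability of the \emph{sets} is used exactly and only where you invoked it correctly: to guarantee, via curve selection applied to $X_i$, that the rescaled sets converge to the tangent cones along every sequence $t_n\to 0^+$, so that the limit maps are defined on all of $\wt{X_1^\bbo}^+$ and land in $\wt{X_2^\bbo}^+$.
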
   
In truth \cite{Sam} deals only with sub-analytic subsets, but the part of the 
demonstration using sub-analyticity goes through the definable context 
readily. 

\bigskip
We recall that the inversion $\iota_q$ is a rational mapping, thus 
semi-algebraic, therefore definable in $\cM$. 
As a corollary of this latter fact, of the inversion Lemma \ref{lem:main} and 
of identity \eqref{eq:cones-0-infty}, we obtain
the following
\begin{proposition}\label{prop:tangent-cones-infty}
Let $(W_i,\infty)$ be the germ of a closed definable set of $\R^{q_i}$ at 
infinity, i=1,2. If there exists a bi-Lipschitz homeomorphism $(W_1,\infty) 
\to (W_2,\infty)$, then there exists a bi-Lipschitz homeomorphism 
$\wt{W_1^\infty}^+ \to \wt{W_2^\infty}^+$ mapping $\bbo$ onto $\bbo$. 
\end{proposition}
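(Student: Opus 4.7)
The plan is to transport the problem from infinity down to the origin by inversion, invoke Sampaio's theorem at the origin, and then translate the conclusion back to infinity using the identification of tangent cones recorded in \eqref{eq:cones-0-infty}. Everything rests on the fact that the inversion $\iota_q$ is a rational map, hence definable in $\cM$, so inverting a definable set produces a definable set.

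Concretely, let $\Phi:(W_1,\infty)\to(W_2,\infty)$ be a bi-Lipschitz homeomorphism germ between closed definable germs. For $i=1,2$, set $X_i := \clos(\iota_{q_i}(W_i))$; each $X_i$ is a closed definable germ at $\bbo$ (definability being preserved by the semi-algebraic operation $\iota_{q_i}$ and by closure in the o-minimal structure $\cM$). By Lemma \ref{lem:main}, the inversion
$$
\iPhi : (X_1,\bbo) \to (X_2,\bbo)
$$
is a bi-Lipschitz homeomorphism germ with $\iPhi(\bbo)=\bbo$ (since $W_1$ is unbounded, the extension at $\bbo$ given by the last clause of Lemma \ref{lem:main} applies). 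Moreover $\iPhi$ is definable as a composition of definable maps.

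Now I apply Sampaio's theorem to the bi-Lipschitz germ $\iPhi:(X_1,\bbo)\to(X_2,\bbo)$ between definable germs at the origin: there exists a bi-Lipschitz homeomorphism
$$
\Psi : \wt{X_1^\bbo}^+ \to \wt{X_2^\bbo}^+
$$
sending $\bbo$ to $\bbo$. By the identity \eqref{eq:cones-0-infty} applied to the closed sets $W_i$, we have
$$
\wt{X_i^\bbo}^+ \;=\; \wt{\iota(W_i)^\bbo}^+ \;=\; \wt{W_i^\infty}^+, \quad i=1,2,
$$
so $\Psi$ is in fact the desired bi-Lipschitz homeomorphism $\wt{W_1^\infty}^+ \to \wt{W_2^\infty}^+$ mapping $\bbo$ onto $\bbo$.

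The only potential obstacle is verifying that Sampaio's theorem genuinely applies in the o-minimal (not just sub-analytic) setting to the germs $(X_i,\bbo)$; but the paper already notes that the relevant part of Sampaio's argument goes through verbatim in the definable context, and the $X_i$ are closed definable germs at $\bbo$. All remaining steps are bookkeeping: producing $X_i$, checking definability, applying Lemma \ref{lem:main}, and reading \eqref{eq:cones-0-infty}.
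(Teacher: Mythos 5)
Your proposal is correct and is exactly the argument the paper intends: transport the germs to the origin by inversion (Lemma \ref{lem:main}, or more precisely its germ version Lemma \ref{lem:magic-infinity}), apply Sampaio's theorem to the resulting bi-Lipschitz germ $(X_1,\bbo)\to(X_2,\bbo)$, and translate the conclusion back via identity \eqref{eq:cones-0-infty}. One small inaccuracy worth noting: you claim $\iPhi$ is ``definable as a composition of definable maps,'' but $\Phi$ is not assumed definable in the proposition; this is harmless since Sampaio's theorem, as stated, requires only that the sets be definable and the homeomorphism be bi-Lipschitz.
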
   
\medskip
\subsection{On the link at infinity}
$ $

\smallskip
Let $S$ be a subset of $\Rq$. For any positive radius $R$, we define the 
following subsets:
$$
S_R := S\cap\bS_R^{q-1}, \;\; S_{\leq R} := S\cap \bB_R^q \;\; {\rm and} \;\;
S_{\geq R} := S\setminus B_R^{q}.
$$
Let again denote $\iota(S)$ the closure of $\iota_q(S^*)$. Thus we get
the obvious identifications
$$ 
\iota_q(S_R) = \iota(S)_\frac{1}{R}, \;\; 
\iota_q(S_{\leq R}^*) = \iota_q(S)_{\geq \frac{1}{R}} \;\; {\rm and} \;\;
\iota_q(S_{\geq R}) = \iota_q(S)_{\leq\frac{1}{R}}^*. 
$$

\bigskip
When $X$ is definable and contains the origin $\bbo$, the local conic 
structure Theorem states that \em there exists $r_0$ such that for any 
radius $r_0\geq r>0$, the definable subset $X_{\leq r}$ is definably 
homeomorphic with $( \wt{X_r}^+ )_{\leq r}$, the "non-negative cone over $X_r$" \em 
\cite{vdD}. 
Moreover such a definable homeomorphism can be found so that it 
preserves the distance to $\bbo$ \cite{Val1}. In particular $X_r$ has constant
topological type for $r \leq r_0$. It can also be shown that 
for any pair of radii $0<r<r'\leq r_0$, the links $X_r$ and $X_{r'}$ are 
bi-Lipschitz definably homeomorphic \cite{Val1,Val3,Val4}, although the 
Lipschitz 
constant in general cannot be uniform over $]0,r_0]$. 

\medskip
Let $W$ be a definable subset of $\R^q$. Using the inversion and the local conic structure Theorem yield the locally conic structure Theorem at infinity: \em There exists a 
positive radius $R_0$ such that for any $R\geq R_0$, the subset $W_{\geq R}$ is definably homeomorphic to $(\wt{W_R}^+)_{\geq R}$, the "non-negative cone over 
$W_R$" at infinity. \em Moreover such a definable homeomorphism can be found 
so that it also preserves the distance to the origin. Last given any pair of
radii $R,R'\geq R_0$, the links $W_R$ and $W_{R'}$ are definable and bi-Lipschitz 
homeomorphic.
 
\medskip
We have mentioned the local conic structure theorems and the bi-Lipschitz 
constancy of the links in light of the following result about Links preserving
reparametrization of definable bi-Lipschitz homeomorphism of Valette:
\begin{theorem}[\cite{Val2,Val3,Val4}]\label{thm:bilip-dist-pres}
Let $(X_i,\bbo)$ be a closed definable germ of $\R^{q_i}$, $i=1,2$. If there exists
a definable bi-Lipschitz homeomorphism $(X_1,\bbo) \to (X_2,\bbo)$, then there 
exists a definable bi-Lipschitz homeomorphism $(X_1,\bbo) \to (X_2,\bbo)$ 
preserving the distance to the origin.
\end{theorem}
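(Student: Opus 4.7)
The plan is to modify the given bi-Lipschitz homeomorphism $\Phi$ by a self-homeomorphism of the source that preserves the angular direction and adjusts the radial coordinate so that the composition becomes distance-preserving. Since $\Phi$ is bi-Lipschitz with $\Phi(\bbo)=\bbo$, the definable function $f(\bx):=|\Phi(\bx)|$ on $X_1$ vanishes only at $\bbo$ and satisfies $f(\bx)\sim |\bx|$ in the sense of Section \ref{subsec:notations}. If one can construct a definable bi-Lipschitz self-homeomorphism $k:(X_1,\bbo)\to(X_1,\bbo)$ with $f\circ k=|\cdot|$, then $\Phi\circ k$ is a definable bi-Lipschitz homeomorphism $(X_1,\bbo)\to(X_2,\bbo)$ preserving distance to the origin. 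So the task reduces to building $k$.

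The construction of $k$ I would attempt rests on a distance-preserving form of the local conic structure theorem at $\bbo$, available in the polynomially bounded o-minimal setting. For $\epsilon>0$ small enough this provides a definable homeomorphism $\Psi:X_1\cap\bB_\epsilon^{q_1}\to\wt{X_1^\bbo}^+\cap\bB_\epsilon^{q_1}$ preserving the distance to $\bbo$. Pulling coordinates back through $\Psi$, a point of $X_1$ near $\bbo$ is described by a pair $(r,\tht)$ with $r\in[0,\epsilon)$ and $\tht$ in the link $X_1\cap\bS_\epsilon^{q_1-1}$. The function $f$ becomes a definable $F(r,\tht)$ such that $F(\cdot,\tht)$ is, for each fixed $\tht$, a strictly increasing homeomorphism onto an interval comparable to $[0,\epsilon)$. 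The natural candidate is then $k(r,\tht):=(F(\cdot,\tht)^{-1}(r),\tht)$, which satisfies $f\circ k=|\cdot|$ tautologically.

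The main obstacle is the bi-Lipschitz verification of $k$. Along each fixed radial ray $k$ is smooth, but controlling how $F(\cdot,\tht)^{-1}$ varies with $\tht$ across the link is the delicate point. I would take a definable cell decomposition of $X_1$ compatible with $f$ and with $|\cdot|$; on each cell, polynomial boundedness supplies Lojasiewicz-type inequalities forcing both $f/|\cdot|$ and its variation in the $\tht$-direction to behave like controlled powers of $r$, which in turn yields a uniform Lipschitz bound for $k$ on each cell. The genuinely hard part, which is the content carried by the references \cite{Val2,Val3,Val4}, is patching these cell-wise estimates across their common boundaries into a globally bi-Lipschitz self-homeomorphism. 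This requires refining the local conic structure so that the transverse parametrization is itself Lipschitz in $\tht$, something one obtains from the preparation theorem for polynomially bounded o-minimal structures together with a careful use of curve selection.
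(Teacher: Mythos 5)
First, a point of comparison: the paper contains no proof of this statement. Theorem \ref{thm:bilip-dist-pres} is imported verbatim from Valette \cite{Val2,Val3,Val4} and used purely as a black box, together with Theorem \ref{thm:main} and the definability of the inversion, to deduce Proposition \ref{prop:bilip-dist-pres-infty}. So there is no in-paper argument to measure your proposal against, and what follows assesses it on its own terms.

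Your opening reduction is sound: setting $f(\bx):=|\Phi(\bx)|$, it suffices to build a definable bi-Lipschitz self-homeomorphism $k$ of $(X_1,\bbo)$ with $f\circ k=|\cdot|$, and then take $\Phi\circ k$. But the construction of $k$ has two genuine gaps. The lesser one: $f\sim|\bx|$ does not make $r\mapsto F(r,\tht)$ strictly increasing; o-minimality gives monotonicity of each such one-variable function only on some interval $(0,\delta(\tht))$, and the uniformity of $\delta$ in $\tht$ needs an argument you do not supply, so the formula $F(\cdot,\tht)^{-1}(r)$ is not yet defined. The serious one: the distance-preserving conic-structure map $\Psi$ of \cite{Val1} is only a \emph{homeomorphism}, not a bi-Lipschitz one --- in general a definable germ is \emph{not} bi-Lipschitz homeomorphic to its tangent cone, which is precisely why Sampaio's theorem \cite{Sam} has content. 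Consequently, ``pulling coordinates back through $\Psi$'' destroys all metric information: a Lipschitz estimate for $k$ written in the $(r,\tht)$ coordinates says nothing about $k$ being Lipschitz on $X_1$ with its outer metric. The coordinates in which your formula for $k$ is transparent are not Lipschitz coordinates, and this cannot be repaired by cell-wise \L{}ojasiewicz estimates alone; Valette's actual arguments work with Lipschitz (L-regular) cell decompositions precisely to avoid this trap. Since you also explicitly defer the patching estimates to the cited references, the proposal is an outline that reduces the theorem to the very literature being quoted rather than a proof.
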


Again as a Corollary of our main result Theorem \ref{thm:main} and Theorem
\ref{thm:bilip-dist-pres} and the semi-algebraicity of the inversion, thus 
definable in $\cM$, we find the following 
\begin{proposition}\label{prop:bilip-dist-pres-infty}
Let $(W_i,\infty)$ be a closed definable germ of $\R^{q_i}$, $i=1,2$. If 
there exists
a definable bi-Lipschitz homeomorphism $(W_1,\infty) \to (W_2,\infty)$, 
then there exists a definable bi-Lipschitz homeomorphism $(W_1,\infty) \to 
(W_2,\infty)$ preserving the distance to the origin.
\end{proposition}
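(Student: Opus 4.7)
The plan is to reduce Proposition \ref{prop:bilip-dist-pres-infty} to its already-known counterpart at the origin (Theorem \ref{thm:bilip-dist-pres}) by sandwiching an application of Valette's theorem between two uses of the inversion lemma \ref{lem:main}. The point is that the Euclidean inversion $\iota_q$ is semi-algebraic, hence definable in the polynomially bounded o-minimal structure $\cM$, so definability is preserved through the transfer.

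First, given a definable bi-Lipschitz homeomorphism germ $\Phi:(W_1,\infty) \to (W_2,\infty)$, set $X_i := \clos(\iota_{q_i}(W_i^*))$ for $i=1,2$. Each $X_i$ is a closed definable germ of $\R^{q_i}$ at $\bbo$, and Lemma \ref{lem:main} (together with the definability of the inversion) yields that $\iota(\Phi):(X_1,\bbo) \to (X_2,\bbo)$ is a definable bi-Lipschitz homeomorphism germ with $\iota(\Phi)(\bbo) = \bbo$. Applying Theorem \ref{thm:bilip-dist-pres} to $\iota(\Phi)$ produces a definable bi-Lipschitz homeomorphism germ $\Psi:(X_1,\bbo) \to (X_2,\bbo)$ that preserves the distance to the origin, i.e. $|\Psi(\bx)| = |\bx|$ for every $\bx \in X_1$ near $\bbo$.

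Next, invert back: define $\wtPhi := \iota(\Psi)$. Since $W_i$ is closed and $\iota_{q_i}$ is involutive, $\clos(\iota_{q_i}(X_i^*))$ coincides with $W_i$ as a germ at $\infty$, so $\wtPhi$ is a germ $(W_1,\infty)\to (W_2,\infty)$. By Lemma \ref{lem:main} applied in the opposite direction, $\wtPhi$ is a bi-Lipschitz homeomorphism germ, and it is definable because $\iota_{q_i}$ and $\Psi$ are. It remains only to check preservation of distance to the origin. For $\by \in W_1$ near $\infty$, set $\bx := \iota_{q_1}^{-1}(\by) \in X_1$. Using $|\iota_q(\bz)| = 1/|\bz|$ on $\bU_q$,
$$
|\wtPhi(\by)| \;=\; |\iota_{q_2}(\Psi(\bx))| \;=\; \frac{1}{|\Psi(\bx)|} \;=\; \frac{1}{|\bx|} \;=\; |\by|,
$$
so $\wtPhi$ preserves the distance to the origin, as required.

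There is no serious obstacle: all the analytic content is absorbed into Lemma \ref{lem:main} and Theorem \ref{thm:bilip-dist-pres}, and the only calculation to verify is the one line above showing that distance-to-the-origin preservation is transferred by inversion (which follows trivially from $|\iota_q(\bz)||\bz|=1$). The mild bookkeeping point to double-check is that representatives may need to be restricted to neighbourhoods on which the inversion identifies the germ of $W_i$ at $\infty$ with the germ of $X_i$ at $\bbo$, but this is routine.
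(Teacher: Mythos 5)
Your proposal is correct and takes essentially the same route as the paper, which states the proposition as an immediate corollary of the inversion lemma, Valette's Theorem \ref{thm:bilip-dist-pres} at the origin, and the semi-algebraicity (hence definability) of $\iota_q$ --- precisely the transfer you spell out, including the one-line verification that $|\iota_q(\bz)|\,|\bz|=1$ converts distance-preservation at $\bbo$ into distance-preservation at $\infty$. The only cosmetic difference is that for germ statements the directly applicable results are Lemmas \ref{lem:magic-infinity} and \ref{lem:magic-origin} rather than the global Lemma \ref{lem:main}, but this changes nothing.
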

%
%

%
%
%
%
%
%
%
%
%
%
%
%
%
%
%
%
%
%
%
%
%         *************************************************************
%
%
%
%
%
%
%
%
%
%
%
%
%
%
%
%
%

%
%
\end{document}